\newtheorem{theorem}{Theorem}
\newtheorem{lemma}[theorem]{Lemma}
\newtheorem*{defn}{Definition}
\newtheorem*{prop*}{Proposition}
\newtheorem{conj}{Conjecture}
\newtheorem*{conj*}{Conjecture}
\newtheorem*{fact*}{Fact}
\newtheorem{prop}{Proposition}
\newtheorem*{ex*}{Example}
\newtheorem{rem}{Remark}
\DeclareMathOperator{\tr}{tr}
\DeclareMathOperator*{\E}{\mathbb{E}}
\newcommand{\BH}{\textnormal{BH}}
\newcommand{\al}{\alpha}
\newcommand{\eps}{\varepsilon}
\newcommand{\om}{\omega}
\newcommand{\Om}{\Omega}
\newcommand{\supp}{\mathrm{supp}}
\newcommand{\mc}[1]{\mathcal{#1}}
\newcommand{\Lesssim}[1]{\lesssim_{{\textstyle\mathstrut}{#1}}}
\theoremstyle{definition}
\numberwithin{equation}{section} 
\numberwithin{figure}{section}
\numberwithin{table}{section}
\newcommand{\C}{\mathbf{C}}
\newcommand{\bC}{\mathbf{C}}
\newcommand{\T}{\mathbf{T}}
\newcommand{\Z}{\mathbf{Z}}
\newcommand{\bR}{\mathbf{R}}
\newcommand{\Sidon}{\textnormal{Sidon}}
\newcommand{\1}{\mathbf{1}}
\newcommand{\bi}{\mathbf{i}}
\newcommand{\bj}{\mathbf{j}}
\newcommand{\mult}{\textnormal{mult}}
\newcommand{\conv}{\textnormal{conv}}
\begin{document}
	
	\title[Bohnenblust--Hille inequality for cyclic groups]{Bohnenblust--Hille inequality for cyclic groups}
	
	\author{Joseph Slote}
	\address{(J.S.) Department of Computing \& Mathematical Sciences,  California Institute of Technology, Pasadena, CA 91125}
	\email{jslote@caltech.edu}

	\author{Alexander Volberg}
	\address{(A.V.) Department of Mathematics, MSU, 
		East Lansing, MI 48823, USA and Hausdorff Center of Mathematics}
	\email{volberg@math.msu.edu}
	
	\author{Haonan Zhang}
	\address{(H.Z.) Department of Mathematics, University of South Carolina, Columbia, SC, 29208, USA}
	\email{haonanzhangmath@gmail.com}
	
	\begin{abstract}
	For any $K>2$ and the multiplicative cyclic group $\Omega_K$ of order $K$, consider any function $f:\Omega_K^n\to\C$ and its Fourier expansion $f(z)=\sum_{\alpha\in\{0,1,\ldots,K-1\}^n}a_\alpha z^\alpha$, with $d:=\deg(f)$ denoting its degree as a multivariate polynomial.
        We prove a Bohnenblust--Hille (BH) inequality in this setting: the $\ell_{2d/(d+1)}$ norm of the Fourier coefficients of $f$ is bounded by $C(d,K)\|f\|_\infty$ with $C(d,K)$ independent of $n$.
        This is the interpolating case between the now well-understood BH inequalities for functions on the poly-torus ($K =\infty$) and the hypercube ($K=2$) but those extreme cases of $K$ have special properties whose absence for intermediate $K$ prevent a proof by the standard BH framework.
        New techniques are developed exploiting the group structure of $\Omega_K^n$.
        
        By known reductions, the cyclic group BH inequality also entails a noncommutative BH inequality for tensor products of the $K \times K$ complex matrix algebra (or in the language of quantum mechanics, systems of $K$-level qudits). These new BH inequalities generalize several applications in harmonic analysis and statistical learning theory to broader classes of functions and operators.

	\end{abstract}
	
	\subjclass[2010]{43A75, 47A30, 81P45, 06E30}
	
	\keywords{Bohnenblust--Hille inequality, cyclic group, Sidon constant, Bohr radius, Heisenberg--Weyl basis, statistical learning theory}


	\maketitle
	
	\section{Introduction}
	
	Let $\T^n$ be the $n$-dimensional torus. For any $d\ge 1$, let $\mc{P}_d(\mathbf{T}^n)$ denote the family of analytic polynomials in $n$ complex variables $z:=(z_1, \dots, z_n)\in \T^n$ of degree at most $d$.
	In other words, $P\in \mc{P}_d(\mathbf{T}^n)$ if
	$$
	P(z) =\sum_{|\al|\le d}c_\al z^\al,
	$$
	where $z^\al:=z_1^{\al_1}\dots z_n^{\al_n}$ with
	\begin{equation*}
		\al:=(\al_1,\dots,  \al_n)\in \Z_{\ge 0}^n,\qquad |\al|:=\al_1+\dots +\al_n.
	\end{equation*}
	A classical inequality due to Bohnenblust and Hille  \cite{BH} states that for fixed $d\ge 1$, for any $n\ge 1$ and for any $P\in \mc{P}_d(\mathbf{T}^n)$ as above we have
	\begin{equation}
		\label{BHT}
		\Big(\sum_{|\al|\le d} |c_\al|^{\frac{2d}{d+1}}\Big)^{\frac{d+1}{2d}} \le C(d) \|P\|_{L^\infty(\mathbf{T}^n)}\,.
	\end{equation}
	Here, $C(d)<\infty$ is some constant depending only on $d$ and is independent of $n\ge1$. This family of inequalities is known as the \emph{(polynomial) Bohnenblust--Hille inequalities}, or  \emph{BH inequalities} for short.
	Since its discovery in 1931 \cite{BH}, it has found many applications in areas as diverse as harmonic analysis, functional analysis, analysis of Boolean functions, probability, infinite-dimensional holomorphy, and analytic number theory \cite{DS,DGMS}.
	
	We remark here that in many applications \cite{DFOOS,BPS}, it is important to have good upper bounds on the best constant $C(d)$ (denoted by $\BH^{\le d}_{\T}$) in Bohnenblust--Hille inequalities \eqref{BHT}. 
	The best known bound is $\BH^{\le d}_{\T}\le C^{\sqrt{d\log d}}$ \cite{BPS} for some universal constant $C>0$, which is of at most sub-exponential growth in $d$. 
	We refer to the monograph \cite{DGMS} for more information.
	
	\medskip
	
	Bohnenblust--Hille inequalities can be extended to more-general contexts; for example, to the vector-valued case in relation to the geometry of Banach spaces \cite{DS,DGMS}.
	How about groups other than $\T^n$?
	This is a natural question in harmonic analysis, and the answer was known \cite{Blei,DMP} for products of the cyclic group of order 2, or the \emph{Boolean cube} $\Omega_2^n:=\{-1,1\}^n$.
	Recall that any function $f:\Omega_2^n\to \mathbf{C}$ has the Fourier--Walsh expansion:
	\begin{equation*}
		f(x)=\sum_{A\subset [n]}\widehat{f}(A)x^A,\qquad x=(x_1,\dots, x_n)\in \Omega_2^n
	\end{equation*}
	where for $A\subset [n]:=\{1,2,\dots, n\}$, $|A|$ denotes the cardinality of $A$, and $x^A:=\prod_{j\in A}x_j$.
	The degree of $f$ is defined as the maximum of $|A|$ for which $\widehat{f}(A)\ne 0$.
	In \cite{DMP}, Defant, Masty\l o, and P\'erez proved that Bohnenblust--Hille inequalities hold for $\Omega_2^n$:
	\begin{equation}\label{ineq:bh boolean}
		\left(\sum_{|A|\le d}|\widehat{f}(A)|^{\frac{2d}{d+1}}\right)^{\frac{d+1}{2d}}\le C(d)\|f\|_{L^\infty (\Omega_2^n)}.
	\end{equation}
	Moreover, they showed that the best possible constant $C(d)>0$, denoted by $\BH^{\le d}_{\Om_2}$, satisfies $\BH^{\le d}_{\Om_2}\le C^{\sqrt{d\log d}}$, which is similar to $\T$. 
	
	Motivations for studying BH inequalities on other groups also come from applications in theoretical computer science.
	The hypercube Bohnenblust--Hille inequalities \eqref{ineq:bh boolean} have recently found unexpected applications to learning theory \cite{EI22} and it is natural to ask about their generalization.
    Specifically, with the help of \eqref{ineq:bh boolean}, Eskenazis and Ivanisvili \cite{EI22} proved that a logarithmic number of random samples suffices to learn bounded low-degree functions $f:\Om_2^n\to [-1,1]$, exponentially improving the dimension-dependence of the sample complexity in previous work (in fact, logarithmic dependence is sharp \cite{EIS}).
    
    This motivated the study of Bohnenblust--Hille inequalities in the quantum realm as well, with a view towards learning quantum observables.
    In \cite{RWZ}, a conjecture of Bohnenblust--Hille inequalities was made for qubit systems (the $2\times 2$ matrix case) and it was swiftly resolved in \cite{CHP} and \cite{VZ22}. In an earlier version of our previous work \cite{SVZ}, we studied the Bohnenblust--Hille inequalities for qudit systems (the $K\times K$ matrix case) that are more general than qubit systems.
    It turns out that we may reduce the problem of qudit Bohnenblust--Hille inequalities with respect to the Heisenberg--Weyl basis to classical Bohnenblust--Hille inequalities for cyclic groups of order $K$ (at least for prime $K$ at the time of writing), which was until that point unstudied.
	
	Therefore, the analogs of Bohnenblust--Hille inequalities for products of general cyclic groups of order $K>2 $ are of importance not only for Fourier analysis but also for application areas such as statistical learning theory.
	
		\medskip
		
		In the present article we prove that Bohnenblust--Hille inequalities hold for all general cyclic groups $\Omega_K^n$ with dimension-free constants.
	More precisely, fix $K>2$ and let $\Omega_K=\{1,\omega,\dots, \omega^{K-1}\}$ be the multiplicative cyclic group of order $K$ with $\omega=e^{\frac{2\pi i}{K}}$. For any $n\ge 1$ and any $f:\Omega_K^n\to \mathbf{C}$ with Fourier expansion 
	\begin{equation}\label{eq:fourier expansion cyclic}
		f(z)=\sum_{\alpha}\widehat{f}(\alpha)z^{\alpha}=\sum_{\alpha}a_{\alpha} z^{\alpha},\qquad z=(z_1,\dots, z_n)\in \Omega_K^n
	\end{equation}
	where $z^\al:=z_1^{\al_1}\dots z_n^{\al_n}$ with
	\begin{equation*}
		\al:=(\al_1,\dots,  \al_n)\in \{0,1,\dots, K-1\}^n,\qquad |\al|:=\al_1+\dots +\al_n.
	\end{equation*}
	Then $f:\Omega_K^n\to \mathbf{C}$ is said to be \emph{of degree at most $d$} if $a_{\alpha}=0$ whenever $|\alpha|>d$.
	We fix this definition of degree throughout the paper, but remark that there are other definitions of $|\cdot|$ in the literature that induce different notions of degree.
	
	In the following, we denote by $\|f\|_X$  the supremum norm of some function $f$ over some set $X$. We write $\|\widehat{f}\|_p$ for the $\ell_p$-norm of the Fourier coefficients for $f$ in \eqref{eq:fourier expansion cyclic},
	\begin{equation*}
	\|\widehat{f}\|_p:=\left(\sum_{\alpha}|\widehat{f}(\alpha)|^p\right)^{1/p}=\left(\sum_{\alpha}|a_{\alpha}|^p\right)^{1/p}.
\end{equation*}		
	
	Our main result is the following:
	
	\begin{theorem}[Cyclic BH inequalities]\label{thm:bh cyclic}
		Fix $d\ge 1$ and $K>2$. Then there exists $C(d,K)>0$ depending only on $d$ and $K$ such that for any $n\ge 1$ and for any 
		\begin{equation*}
			f(z)=\sum_{|\alpha|\le d}a_{\alpha}z^{\alpha},\qquad z\in \Omega_K^n
		\end{equation*}
		that is of degree at most $d$, we have 
		\begin{equation}\label{ineq:bh cyclic}
			\|\widehat{f}\|_{\frac{2d}{d+1}}=\left(\sum_{|\alpha|\le d}|a_{\alpha}|^{\frac{2d}{d+1}}\right)^{\frac{d+1}{2d}}\le C(d,K)\|f\|_{\Om_K^n}.
		\end{equation}
		If $K$ is prime, we may choose $C(d,K)=C_K^{d^2}$ for some constant $C_K$ depending only on $K$.
	\end{theorem}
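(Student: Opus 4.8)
\emph{Proof idea.} The plan is to run an induction on the degree $d$, keeping the general shape of the classical Bohnenblust--Hille arguments on $\T^n$ and $\Omega_2^n$: by a Blei-type inequality, bound the $\ell_{2d/(d+1)}$ norm of the coefficient sequence by a product of $d$ mixed norms, each pairing an $\ell_2$ over one block of Fourier indices with an $\ell_1$ over the complementary blocks; control the $\ell_2$ factors by Parseval's identity applied to slices of $f$ in a block of variables; and control the $\ell_1$ factors by feeding suitable (vector-valued) restrictions of $f$ to the inductive hypothesis, bounding their sup norms by hypercontractivity. What cannot be transcribed verbatim is the usual first move --- polarizing a degree-$d$ polynomial into a symmetric $d$-linear form --- which is exactly what makes this induction close in the known proofs, but which fails on $\Omega_N^n$ for $2<N<\infty$: a degree-$d$ polynomial genuinely carries the monomials $z_j^{k}$ with $1\le k\le\min(d,N-1)$ in each variable, and only the $N$-th roots of unity are available to separate these powers. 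So the substance of the proof is a replacement for polarization, running the degree induction directly on polynomials and exploiting that $\Omega_N^n$ is a group.

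The first ingredient I would set up is a hypercontractive estimate on $\Omega_N^n$ for low-degree functions: $\|f\|_{L^q(\Omega_N^n)}\le A(N,q)^{d}\,\|f\|_{L^2(\Omega_N^n)}$ whenever $\deg f\le d$, for a fixed $q>2$ and $A(N,q)$ depending only on $N$ and $q$. I would prove the one-variable case on $\Omega_N$ by a finite spectral computation for a suitable averaging operator, handled level by level in degree, and then tensorize over the $n$ coordinates. The dependence of the one-variable constant on $N$ is what forces the $N$-dependence of the final $C(d,N)$, and I would expect the prime case --- where $\Z_N$ has no intermediate subgroup structure and the one-variable estimates are cleanest --- to be where one gets the uniform bound $C(d,N)=C_N^{d^2}$, with the exponent $d^2$ accounting for a factor exponential in $d$ picked up at each of the $d$ induction steps.

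The inductive step is what I expect to be the main obstacle. After the Blei-type reduction, the pieces of $f$ that use a chosen block of variables to positive degree have strictly smaller degree in the complementary variables, so --- after matching exponents via $\|\cdot\|_{\ell_p}\le\|\cdot\|_{\ell_{p'}}$ for $p\ge p'$, valid since $d\mapsto 2d/(d+1)$ is increasing, and after bounding their norms by averaging over the block, i.e.\ by $\|f\|_{\Omega_N^n}$ --- these can be fed to the inductive hypothesis together with the hypercontractive estimate above. The delicate piece is the part of $f$ that does not involve the chosen block, which has the \emph{same} degree $d$ and so cannot be recursed on naively; this is precisely where the proofs for $N=2$ (via $z_j^2=1$) and $N=\infty$ (via rotation invariance) invoke structure that $\Omega_N^n$ lacks, and the crux is to supply a replacement --- I expect one via a group-translation / convolution device --- keeping all constants dimension-free and at worst exponential in $d$ at each stage. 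The base case $d=1$ is elementary: for $f=a_0+\sum_j a_{e_j}z_j$, choosing each $z_j\in\Omega_N$ to be the $N$-th root of unity that best aligns $a_{e_j}z_j$ with $a_0$ gives $\mathrm{Re}\big(e^{-i\arg a_0}f(z)\big)\ge\cos(\pi/N)\big(|a_0|+\sum_j|a_{e_j}|\big)$, hence $|a_0|+\sum_j|a_{e_j}|\le\sec(\pi/N)\,\|f\|_{\Omega_N^n}$, i.e.\ $C(1,N)=\sec(\pi/N)$.
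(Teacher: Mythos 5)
Your proposal correctly diagnoses where the classical Bohnenblust--Hille scheme breaks on $\Omega_N^n$ for $2<N<\infty$ --- the polarization step --- but it does not actually supply the replacement; it only announces that one ``expects'' a group-translation/convolution device to close the induction. That missing device is the entire content of the theorem. Indeed, the natural substitute for polarization is a generalized maximum modulus principle comparing $\sup_{\conv(\Omega_N)^n}|f|$ with $\sup_{\Omega_N^n}|f|$, and this is genuinely problematic: it fails with constant $1$ already for $N=3$, $n=1$, $d=2$, and in general it remains a conjecture (resolved only for $N=3$ elsewhere, and for all $N$ only under a restriction on local degree). A second gap is your hypercontractivity setup: the known hypercontractive estimates on $\Omega_N^n$ compatible with the degree notion $|\alpha|=\alpha_1+\cdots+\alpha_n$ are established only for polynomials of local degree less than $N/2$, and extending them to arbitrary local degree is not straightforward; your plan to ``tensorize a one-variable spectral computation'' does not address this because the relevant semigroup does not act diagonally with eigenvalue $e^{-t|\alpha|}$ once individual exponents exceed that range. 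Your base case $d=1$ is fine, but the inductive step as described would stall exactly where you flag the ``delicate piece.''

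The actual proof avoids the classical scheme altogether and reduces to the Boolean cube. One decomposes $f$ into \emph{support-homogeneous} parts (grading by $|\supp(\alpha)|$ rather than $|\alpha|$) and introduces, for $a\neq b\in\Omega_N$, a pseudo-projection $\mathfrak{D}_{(a,b)}$ that keeps only the monomials of maximal support size, multiplies each coefficient by $\tau_\alpha^{(a,b)}=\prod_{j:\alpha_j\neq0}(a^{\alpha_j}-b^{\alpha_j})$, and appends Boolean variables $x^A$. Substituting $z_j=\frac{a+b}{2}+\frac{a-b}{2}x_j$ and invoking the Defant--Masty\l o--P\'erez bound on the top homogeneous part of a bounded low-degree function on $\Omega_2^n$ shows $\mathfrak{D}_{(a,b)}$ is bounded $L^\infty(\Omega_N^n)\to L^\infty(\Omega_N^n\times\Omega_2^n)$ with constant $(2\sqrt2+2)^\ell$. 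The support-homogeneous case of the cyclic BH inequality then follows by applying the \emph{Boolean} BH inequality in the $x$-variables for each fixed $z$ and averaging over $z$; no hypercontractivity on $\Omega_N^n$ and no polarization are needed. The remaining work is a Splitting Lemma bounding each support-homogeneous part of $f$ by $\|f\\|_{\Omega_N^n}$: for prime $N$ one iterates $\mathfrak{D}_{(\omega^k,\bar\omega^k)}$, $k=1,\dots,N-1$, so that the accumulated factors collapse to a single nonzero constant $d_N^\ell$ (this is where primality enters, via $\{jk:1\le k\le N-1\}\equiv\{1,\dots,N-1\}\bmod N$), giving $C(d,N)=C_N^{d^2}$; for general $N$ one iterates a single $\mathfrak{D}_{(1,\omega)}$ and inverts a Vandermonde matrix in the distinct values of $\tau_\alpha^{(1,\omega)}$. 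Your outline, by contrast, would at best recover the weaker ``convex hull'' inequality with $\|f\|_{\conv(\Omega_N)^n}$ on the right-hand side, which is not the stated theorem and is unusable for the noncommutative applications.
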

	
	Partial results were obtained in an earlier version of our work \cite{SVZ}, where we proved \eqref{ineq:bh cyclic} under an additional constraint that $f$ has \emph{individual degree} less than $K/2$ with constants $C(d,K)=C_K^d$.
	By individual degree we mean the maximum degree of $f$ when viewing it as a univariate polynomial in each of the variables $z_1,\dots, z_n$ (so \emph{e.g.,} $z_1^2z_2^3$ has individual degree $3$).
	This partial result is recalled in Appendix \ref{sect:partial} to compare different approaches to the same problem.
	
	\medskip
	
	The proof of Defant, Masty\l o, and P\'erez of Bohnenblust--Hille inequalities \eqref{ineq:bh boolean} for $\Omega_2^n$ follows the similar scheme of the proof for $\T^n$ case in \cite{BPS}, with certain technical adjustments subject to accommodate the special structure of $\Omega_2^n$.
	Although most parts of this scheme carry over for general cyclic groups $\Omega_K^n$, one crucial polarization step does not apply to $2<K<\infty$ (it works for both $\Omega_2^n$ and $\Omega_\infty^n:=\T^n$), whence it does not directly yield the  Bohnenblust--Hille inequalities \eqref{ineq:bh cyclic}.
	As we explain, a certain ``BH-like'' inequality can still be obtained for $2<K<\infty$ following this scheme by altering the domain of $f$, but this seems to erase the discrete lattice structure of $\Om_K^n$ and for example renders it unusable for applications (such as deriving the results in the quantum counterpart). In Section \ref{sect:obstacle} we give a brief explanation and in Appendix \ref{sect:appendix} we detail this difference between the previous work (on $\T^n$ and $\Omega_2^n$) and the case of $\Omega_K^n,2<K<\infty$.

	Section \ref{sec:the-proof} is devoted to the proof of Theorem \ref{thm:bh cyclic} in full generality, where we circumvent the challenges described in Section \ref{sect:obstacle} as well as Appendix \ref{sect:appendix}.
    Different from the classical argument, we shall prove Bohnenblust--Hille inequalities for general cyclic groups \eqref{ineq:bh cyclic} by reducing the problem to $\Om_2^n$ \eqref{ineq:bh boolean}.
	The reduction is technical.
	One technique gives good constants when the order $K$ is prime, while the other technique yields the result for all $K>2$ but the constant becomes worse. Our proof strategy in this paper is also different from that of an earlier version of our previous work \cite{SVZ} establishing the partial results aforementioned. 

	In Section \ref{sect:applications}, we discuss several applications, including results about Sidon constants, juntas, learning bounded low-degree functions and a noncommutative Bohnenblust--Hille inequality for  matrices whose degree is low in the Heisenberg--Weyl basis. 
	
	\medskip
	
	In the following, the implicit constants $C_K, C(d,K)...$ may differ from line to line.
	
	\subsection*{Acknowledgments}	J.S. is supported by Chris Umans' Simons Institute Investigator Grant. A.V. is supported by NSF DMS-1900286, DMS-2154402 and by Hausdorff Center for Mathematics.

This work was started  while all three authors were in residence at the Institute for Computational and Experimental Research in Mathematics in Providence, RI, during the Harmonic Analysis and Convexity program. It is partially supported by NSF  DMS-1929284. The authors would like to thank the anonymous referee for the careful reading and valuable remarks.
	
	\section{The obstacle: a generalized maximum modulus principle}
	\label{sect:obstacle}
In this section we briefly explain the difficulties of adapting the proof of the Bohnenblust--Hille inequalities for $\T^n$ and $\Omega_2^n$ in \cite{BPS,DMP} to $\Omega_K^n$ for $K>2$.
We do find that retracing the old proof can yield interesting estimates (see \eqref{ineq:convex hull bh} below), but such estimates will not be used in our proof of Theorem \ref{thm:bh cyclic}.
We therefore defer a lengthier discussion of the \cite{BPS,DMP} approach to Appendix \ref{sect:appendix}.

The main obstacle can be explained compactly.
It seems that the best that one can deduce from the old argument (see Appendix \ref{sect:appendix}) is the following convex hull version Bohnenblust--Hille inequalities for cyclic groups $\Omega_K^n,K>2$:
	\begin{equation}\label{ineq:convex hull bh}
\left(		\sum_{|\alpha|\le d}|a_{\alpha}|^{\frac{2d}{d+1}}\right)^{\frac{d+1}{2d}}
\le C(d,K)\|f\|_{\conv(\Omega_K)^n}
	\end{equation}
	for all $f(z)=\sum_{|\alpha|\le d}a_{\alpha}z^\alpha$ on $\Omega_K^n
$. Here, $f$ on the right-hand side is understood as its natural extension via \eqref{eq:fourier expansion cyclic}  to $\conv(\Omega_K)^n$ as an analytic polynomial of degree at most $d$ and individual degree at most $K-1$. 

In fact, we remark there is a separate direct argument that arrives at the same inequality \eqref{ineq:convex hull bh}, albeit with a worse constant.
Take $c=c_K=\cos(\frac{\pi}{K})^{-1}\in (1,2]$, which is the smallest $c>0$ such that $c^{-1}\T\subset \conv(\Omega_K)$. Since $|\alpha|\le d$, we have
\begin{align*}
	\left(		\sum_{|\alpha|\le d}|a_{\alpha}|^{\frac{2d}{d+1}}\right)^{\frac{d+1}{2d}}
	\le c_K^d	\left(		\sum_{|\alpha|\le d}\left|\frac{a_\alpha}{c_K^{|\alpha|}}\right|^{\frac{2d}{d+1}}\right)^{\frac{d+1}{2d}}.
\end{align*}
Applying Bohnenblust--Hille inequalities for  $\T^n$ to 
\begin{equation*}
	z\mapsto f(z/c_K)=\sum_{|\alpha|\le d}\frac{a_{\alpha}}{c_K^{|\alpha|}}z^\alpha,
\end{equation*} 
one has 
\begin{equation*}
	\left(		\sum_{|\alpha|\le d}\left|\frac{a_\alpha}{c_K^{|\alpha|}}\right|^{\frac{2d}{d+1}}\right)^{\frac{d+1}{2d}}
	\le \BH^{\le d}_{\T}\sup_{z\in \T^n}\left|f(z/c_K)\right|
		\le \BH^{\le d}_{\T}\sup_{z\in (c_K^{-1}\T)^n}\left|f(z)\right|,
\end{equation*}
and the right-hand side is bounded from above by 
\begin{equation*}
	\BH^{\le d}_{\T}\sup_{z\in \conv(\Omega_K)^n}\left|f(z)\right|,
\end{equation*}
according to our choice of $c_K$. This concludes the proof of \eqref{ineq:convex hull bh} with $C(d,K)=c_K^d\BH^{\le d}_{\T}$ that is of exponential growth in $d$.

	\medskip
	
	So to prove the Bohnenblust--Hille inequality \eqref{ineq:bh cyclic} for cyclic groups $\Omega_K^n$, we may reduce to showing
	\begin{equation}\label{ineq:mmp}
		\sup_{z\in \conv(\Omega_K)^n}\left|f(z)\right|
		\le C(d,K)\sup_{z\in \Omega_K^n}\left|f(z)\right|.
	\end{equation}	
	For $K=\infty$ or $K=2$ this is trivial with constant $C(d)=1$: in the former case, we have \eqref{ineq:mmp} by the maximum modulus principle; in the latter case, $f:\Omega_2^n\to \mathbf{C}$ is always multi-affine and convexity suffices. However, in the general case $2<K<\infty$, \eqref{ineq:mmp} seems highly non-trivial.
	This leads to the following conjecture:
	
	\begin{conj}[Generalized Maximum Modulus Principle]
		\label{conjectuer}
		Fix $K>2$ and $d\ge 1$. Then there exists $C(d,K)>0$ such that for all $n\ge 1$ and for all $f:\Omega_K^n\to \mathbf{C}$ that is of degree at most $d$ and of individual degree at most $K-1$:
		\begin{equation*}
			f(z)=\sum_{\alpha\in \{0,1,\dots, K-1\}^n:|\alpha|\le d}a_{\alpha}z^{\alpha},
		\end{equation*}
		we have 
		\begin{equation*}
			\max_{z\in \conv(\Omega_K)^n}|f(z)|\le 
			C(d,K)\max_{z\in \Omega_K^n}|f(z)|.
		\end{equation*}
	\end{conj}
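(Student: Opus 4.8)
The plan is to prove the inequality by induction on the number of variables $n$, rounding one coordinate at a time from $\conv(\Om_N)$ down to $\Om_N$ via a one‑variable Lagrange interpolation, and then to control the cost accumulated over all $n$ roundings by exploiting $\deg f\le d$. I expect this last cost‑control to be the real difficulty.

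\textbf{One-variable rounding.} Since $f$ has local degree $\le N-1$, for every fixed choice $z'$ of the first $n-1$ variables the function $\zeta\mapsto f(z',\zeta)$ is a polynomial of degree $\le N-1$, hence coincides with the Lagrange interpolant of its $N$ values on $\Om_N$. Writing $\ell_0,\dots,\ell_{N-1}$ for the Lagrange basis polynomials at $\Om_N$ and $\Lambda_N:=\sup_{\zeta\in\conv(\Om_N)}\sum_{m=0}^{N-1}|\ell_m(\zeta)|$ for the associated Lebesgue constant on the polygon (a finite quantity depending only on $N$, of order $\log N$), we get $\sup_{\zeta\in\conv(\Om_N)}|f(z',\zeta)|\le\Lambda_N\max_{0\le m<N}|f(z',\omega^m)|$; if $f$ has individual degree $\le\ell$ in its last variable one may use instead the over‑determined Lebesgue constant $\lambda_N(\ell)\le\Lambda_N$, with $\lambda_N(0)=\lambda_N(1)=1$. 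Letting $z'$ range over $\conv(\Om_N)^{n-1}$ and then applying the inductive hypothesis to each slice $f(\cdot,\omega^m)$, a polynomial of the same degree in $n-1$ variables, yields
\[
\|f\|_{\conv(\Om_N)^n}\le\Lambda_N\,\|f\|_{\conv(\Om_N)^{n-1}\times\Om_N}\le\Big(\textstyle\prod_{j=1}^n\lambda_N(\ell_j)\Big)\,\|f\|_{\Om_N^n},
\]
with $\ell_j$ the individual degree of the $j$-th variable in $f$.

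\textbf{Controlling the product (the crux).} It remains to bound $\prod_{j=1}^n\lambda_N(\ell_j)$ by a quantity depending only on $d$ and $N$. This is not a formal consequence of $\deg f\le d$: for $f=\sum_{j=1}^n z_j^{\,N-1}$ one has $\ell_j=N-1$ for all $j$, so the product equals $\Lambda_N^{\,n}$, yet the conjecture holds for that $f$ with constant $1$, because $\sum_j w_j^{\,N-1}$ is maximized over $\conv(\Om_N)^n$ at the vertex $(1,\dots,1)$. What the naive estimate misses is that the error incurred when rounding $z_j$ is governed not by the full worst case over the coefficient block $g^{(j)}_0,\dots,g^{(j)}_{N-1}$ (where $g^{(j)}_k$ is the coefficient of $z_j^{\,k}$ in $f$), but only by the directions in which that block, \emph{viewed as a vector of functions of the remaining variables}, actually forces a large interpolation error; and the constraint that the exponents in each monomial sum to $\le d$ should bound the total such loss across all coordinates. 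Concretely, one would like a rounding lemma that is uniform in the remaining variables $\hat z$ --- of the shape: if $F(\zeta,\hat z)=\sum_{k}\zeta^k g_k(\hat z)$ has total degree $\le d$, then $\|F\|_{\conv(\Om_N)\times K}\le\|F\|_{\Om_N\times K}\cdot\big(1+\varphi(\text{degree budget used by }g_\bullet,N)\big)$ with $\varphi$ additive enough that the iterated product telescopes into a bound depending only on $d$ and $N$. Producing such a lemma --- one that simultaneously sees the degree budget shared among all of $\hat z$ rather than paying a fixed $\lambda_N$ per coordinate --- is the heart of the matter.

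\textbf{A reformulation.} It may help to view the statement through polarization. Homogenizing (a reduction that is routine when $d<N$ via $\omega$-rotation averaging, and requires an extra Fourier-truncation argument on $\Om_N^n$ when $d\ge N$, since the natural noise semigroup is not an $L^\infty$ contraction on $\Om_N^n$ for $N\ge3$), assume $f$ homogeneous of degree $d$ and let $L_f$ be its symmetric $d$-linear form, $L_f(z,\dots,z)=f(z)$. Every $w\in\conv(\Om_N)^n$ is the barycenter $\E Z$ of a random vertex $Z\in\Om_N^n$ with independent coordinates, so by multilinearity $f(w)=\E\,L_f(Z^{(1)},\dots,Z^{(d)})$ for iid copies $Z^{(1)},\dots,Z^{(d)}$, and the conjecture becomes equivalent to the off‑diagonal bound $\sup_{z^{(1)},\dots,z^{(d)}\in\Om_N^n}|L_f(z^{(1)},\dots,z^{(d)})|\le C(d,N)\|f\|_{\Om_N^n}$ (trivial for $N=2$ since $f$ is multi-affine; the Cauchy form of polarization for $N=\infty$). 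The gain is that $L_f$ is multilinear, so each argument may be enlarged from $\Om_N^n$ to $\conv(\Om_N)^n$ for free; the loss is that the only available bound in the reverse direction --- the polarization inequality of \cite[Proposition 4]{DMP} used in \eqref{ineq:convex} --- returns a supremum of $|f|$ over the convex hull of the arguments, i.e.\ over $\conv(\Om_N)^n$ again. Breaking this circularity seems to require, once more, a correlated rounding argument of the kind above; this is why the assertion is offered only as a conjecture.
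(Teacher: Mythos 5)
Your write-up is candid that it does not prove the statement, and indeed it does not: the step you yourself flag as ``the crux'' --- replacing the accumulated per-coordinate Lebesgue-constant loss $\Lambda_N^{\,n}$ by a bound depending only on $d$ and $N$ --- is exactly the content of the conjecture, and neither the correlated rounding lemma you ask for nor the polarization reformulation is actually supplied (the latter you correctly identify as circular). So, as a proof, the gap is the entire load-bearing step. For calibration: the paper does not prove this statement in full generality either; it is posed as Conjecture \ref{conjectuer}, resolved completely only for $N=3$ in \cite{SVZgmp}, and in Section \ref{sect:partial} only under the extra restriction that the local degree is at most half the group order.

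It is still worth contrasting your per-coordinate strategy with the paper's partial solution, because the latter shows how to sidestep precisely the multiplicative blow-up you run into. Rather than interpolating one coordinate at a time and paying a factor $\Lambda_N>1$ per coordinate, Lemma \ref{key lem} produces a radius $\eps_0=\eps_0(N)$ such that for every $z$ with $|z|\le\eps_0$ the entire moment vector $(1,z,\dots,z^{N-1})$ is a \emph{genuine} convex combination (nonnegative weights summing to $1$) of the corresponding rows of the discrete Fourier matrix of the group. Tensorizing these one-dimensional convex combinations gives $\sup_{\|z\|_\infty\le\eps_0}|f(z)|\le\sup_{z\in\Om_{2N-1}^n}|f(z)|$ with constant exactly $1$, so nothing accumulates over the $n$ coordinates; the degree-dependent price $d\,\eps_0^{-d}$ is paid only once at the end, by rescaling from $\eps_0\D^n$ to $\D^n$ via Cauchy estimates on homogeneous parts. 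The reason this yields only a partial answer is that the convex-combination lemma controls only the powers $z^m$ with $m\le N-1$ out of $2N-1$ roots of unity, which forces the local-degree restriction. Two further cautions about your sketch: the homogenization you describe as ``routine'' is not --- the rotation-averaging trick on $\Om_N^n$ only separates degrees modulo $N$, and the paper observes that splitting a bounded polynomial on $\Om_N^n$ into uniformly bounded homogeneous parts is essentially equivalent to the conjecture itself; and your Lebesgue constants satisfy $\lambda_N(\ell)>1$ already for $\ell=2$ (consistent with Section \ref{sect:partial}, where it is shown the constant in the conjecture cannot be $1$ even for a single variable and degree $2$), so no purely local, lossless rounding of the kind your induction needs can exist.
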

	
	Our proof of Theorem \ref{thm:bh cyclic} below fully circumvents Conjecture \ref{conjectuer}.
	The full positive answer to Conjecture \ref{conjectuer} was later obtained in \cite{SVZgmp}, and in Appendix \ref{sect:partial} we provide a partial answer attempt that was present in an earlier version of \cite{SVZ} before our later work \cite{SVZgmp}.

	\begin{rem}
	\label{2223}
	It is important to note that in certain applications (see Section \ref{sect:applications}), the convex hull version \eqref{ineq:convex hull bh} cannot be used and we really do need \eqref{ineq:bh cyclic}.
	\end{rem}

	\section{Proof of Theorem \ref{thm:bh cyclic}}
	\label{sec:the-proof}
	
	In the torus  $\T^n$ case, the BH inequalities \eqref{BHT} can be reduced to \emph{homogeneous polynomials}, \emph{i.e.}, linear combinations of monomials $z^\alpha$ with same $|\alpha|$.
	However, it is not clear how to do this for $\Om_2^n$. The proof of \eqref{ineq:bh boolean} for non-homogeneous polynomials is more involved; see \cite{DMP} for details.
    In the setting of Theorem \ref{thm:bh cyclic}, it is similarly unclear whether we may reduce the proof of BH inequalities for $\Omega^n_K$ \eqref{ineq:bh cyclic} to homogeneous polynomials only.
	Ultimately we succeed by reducing to a different notion of homogeneity, given in terms of the size of the support of $\alpha$. Write $|\supp(\alpha)|:=|\{j:\alpha_j\neq 0\}|$. Throughout this section, we fix $K\ge 3$.
	
	\begin{defn}[Support-homogeneous polynomials]
		A polynomial $f:\Om_K^n\to\C$ is $\ell$-\emph{support-homogeneous} if it can be written as
		\[f(z) = \sum_{\alpha:|\supp(\alpha)|=\ell}a_\alpha z^\alpha\,.\]
	\end{defn}
	
	We will employ a certain operator that removes monomials whose support sizes are not maximal and alters the coefficients of the remaining terms (those of maximal support size) in a controlled way.
	
	\begin{defn}[Maximal support pseudo-projection]
		Fix $\xi\in \Om_K\backslash\{1\}$.
		For any multi-index $\alpha\in\{0,1,\ldots, K-1\}^n$ we define the factor
		\begin{equation}
			\tau_\alpha^{(\xi)} =\prod_{j: \alpha_j\neq 0}(1-\xi^{\alpha_j})\,.
			\label{eq:tau}
		\end{equation}
		For any polynomial with maximal support size $\ell\ge 0$,
		$$f(z)=\sum_{|\supp(\alpha)|\le \ell}a_\alpha z^\alpha,$$
		on $\Omega_K^n$, we define $\mathfrak{D}_{\xi} f:\Omega_K^n\times\Omega_2^n\to \C$ via
		\[\mathfrak{D}_{\xi}f(z,x) =\sum_{|\supp(\alpha)|= \ell}\tau_\alpha^{(\xi)}\,a_\alpha z^\alpha x^{\supp(\alpha)}\,.\]
	\end{defn}
	
	Denote by $\vec{1}=(1,\dots, 1)$ the vector in $\Omega_2^n$ that has all entries $1$.  Note that $\mathfrak{D}_{\xi}f(\cdot,\vec 1)$ is the $\ell$-support-homogeneous part of $f$, except where the coefficients $a_\alpha$ have picked up the factor $\tau_\alpha^{(\xi)}$.
	And we note the relationships among the $\tau_\alpha^{(\xi)}$'s can be quite intricate; while in general they are different for distinct $\alpha$'s, this is not always true:
	consider the case of $K=3$ and the two monomials
	\begin{equation}
		\label{ex:same-taus}
		z^{\beta}:=z_1^2 z_2z_3z_4z_5z_6z_7z_8,\quad z^{\beta'}:=z_1^2 z_2^2 z_3^2 z_4^2 z_5^2 z_6^2 z_7^2z_8\,.
	\end{equation}
	Then
	\[
	\tau_{\beta}^{(\omega)}\;=\;(1-\omega)^7(1-\omega^2)\;=\;(1-\omega)(1-\omega^2)^7\;=\;\tau^{(\omega)}_{\beta'},
	\]
	which follows from the identity $(1-\omega)^6=(1-\omega^2)^6$ for $\omega=e^{\frac{2\pi i}{3}}$.
	
	\medskip
	
	The somewhat technical definition of $\mathfrak{D}_{\xi}$ is motivated by the proof of its key property, namely that it is bounded from $L^\infty(\Omega_K^n)$ to $L^\infty({\Om_K^n\times\Om_2^n})$ with a dimension-free constant when restricted to low-degree (actually low support size) polynomials. Before discussing this, let us recall the following result on $\Omega_2^n$:
	
	\begin{prop}\cite[Lemma 1 (iv)]{DMP}\label{prop:riesz proj}
		Consider any polynomial $g:\Omega^n_2\to \mathbf{C}$ of degree at most $d>0$. Suppose that $g_m$ is the $m$-homogeneous part of $g$ for any $m\le d$. Then 
		\begin{equation}
			\|g_m\|_{\Omega_2^n}\le (1+\sqrt{2})^d \|g\|_{\Omega_2^n}.
		\end{equation}
	\end{prop}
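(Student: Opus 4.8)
The plan is to reduce the statement to a one–variable coefficient estimate by passing to the multilinear extension, and then to prove that estimate by exploiting multilinearity rather than mere boundedness on an interval.

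\smallskip

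\textbf{Reduction to one complex variable.} On $\Om_2^n$ one has $x_j^2=1$, so $g$ agrees with a polynomial $\tilde g$ that is affine in each coordinate and still has degree at most $d$; replacing $g$ by $\tilde g$ changes neither side. Extend $\tilde g$ to $\C^n$. For fixed values of the remaining variables the map $z_j\mapsto |\tilde g(\dots,z_j,\dots)|$ is the modulus of an affine function, hence convex on real lines and subharmonic on complex lines; iterating this coordinatewise gives $\|\tilde g\|_{[-1,1]^n}=\|\tilde g\|_{\Om_2^n}=\|g\|_{\Om_2^n}=:M$. Now fix $x_0\in\Om_2^n$ and consider the single complex variable polynomial $R(z):=\tilde g(zx_0)=\sum_{k=0}^d z^k g_k(x_0)$; since $zx_0\in[-1,1]^n$ whenever $z\in[-1,1]$, we have $|R(z)|\le M$ for all $z\in[-1,1]$. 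Thus it suffices to bound the $m$th coefficient $g_m(x_0)$ of $R$ by $(\sqrt2+1)^m M$ and then take the supremum over $x_0$.

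\smallskip

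\textbf{The coefficient estimate.} This is the heart of the matter, and one must be careful: boundedness of $R$ on $[-1,1]$ alone controls the $m$th coefficient only by a degree–dependent quantity — the Chebyshev polynomial $T_d$ is bounded by $1$ on $[-1,1]$ yet its $m$th coefficient is of order $d^m$ — so the polynomials $R$ arising here must be further constrained by the multilinear structure of $\tilde g$. I would obtain the clean bound by a recursion on the number of variables: writing $\tilde g=A+x_nB$ with $A,B$ multilinear on $\Om_2^{n-1}$ of degrees $\le d$ and $\le d-1$ and with $\|A\|_{\Om_2^{n-1}},\|B\|_{\Om_2^{n-1}}\le M$ (since $A\pm B=g(\pm 1,\cdot)$), the $m$–homogeneous part satisfies $g_m=A_m+x_nB_{m-1}$, whence $\|g_m\|_{\Om_2^n}=\max\bigl(\|A_m+B_{m-1}\|_{\Om_2^{n-1}},\,\|A_m-B_{m-1}\|_{\Om_2^{n-1}}\bigr)$. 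A naive triangle inequality together with the inductive bounds $\|A_m\|\le(\sqrt2+1)^mM$, $\|B_{m-1}\|\le(\sqrt2+1)^{m-1}M$ loses a factor, and even the sharpened estimate $\max(|a+b|,|a-b|)\le\sqrt2\,(|a|^2+|b|^2)^{1/2}$ does not close the recursion. The fix is to run the induction with a \emph{strengthened} hypothesis — a bound on a suitable weighted combination of the homogeneous–part norms across consecutive degrees, chosen so that one peeling step costs exactly the factor $\sqrt2+1$ — and here one uses that $A_m\pm B_{m-1}$ are themselves related to the homogeneous parts of $g(\pm1,\cdot)$, so the pair $(A_m,B_{m-1})$ is not arbitrary. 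Verifying that such a recursion closes with the precise constant $\sqrt2+1$ (rather than something larger) is the main obstacle I anticipate.

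\smallskip

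\textbf{Alternative route and conclusion.} If the recursion proves stubborn, an alternative is the substitution $x_j=y_j^2$, which identifies $g_m$ with the degree-$2m$ homogeneous component of a function on the order-$4$ cyclic group $\Om_4^n$ whose sup-norm is still $M$; extracting that component by a finite average over $(2d{+}1)$th roots of unity and controlling the rotated evaluations of $\tilde g$ — using that $\cos(\pi/4)^{-1}=\sqrt2$ governs how far past $\Om_4$ one may go — should reproduce the constant more transparently. Either way, once $|g_m(x_0)|\le(\sqrt2+1)^m\|g\|_{\Om_2^n}$ is established for every $x_0\in\Om_2^n$, taking the supremum over $x_0$ completes the proof.
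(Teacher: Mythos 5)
Your first step (pass to the multilinear representative, use coordinatewise convexity to get $\|\tilde g\|_{[-1,1]^n}=\|g\|_{\Omega_2^n}$, then restrict to the ray $z\mapsto zx_0$ so that $R(z)=\sum_{k\le d}g_k(x_0)z^k$ is bounded by $M$ on $[-1,1]$) is correct and is in fact the standard route; note the paper itself offers no proof here, it simply quotes \cite[Lemma 1 (iv)]{DMP}. But your proposal stops exactly where the proof has to happen: the coefficient estimate $|g_m(x_0)|\le(\sqrt2+1)^mM$ is never established. The ``strengthened induction'' in your second step and the $\Omega_4$ detour in your third are announced rather than carried out, and you yourself flag closing the recursion as the main obstacle. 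As written this is a plan, not a proof.

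Moreover, the missing step cannot be filled in the form you are aiming at: a constant $(\sqrt2+1)^m$ that is uniform in the total degree $d$ is simply not available when $m<d$. Your own Chebyshev worry, pushed through the diagonal substitution, produces a counterexample to the statement as quoted: let $g$ be the Fourier--Walsh (multilinear) representative of $x\mapsto T_3\bigl(\tfrac{x_1+\cdots+x_n}{n}\bigr)$ on $\Omega_2^n$, so $\|g\|_{\Omega_2^n}\le1$ and $\deg g=3$; a direct computation gives $\widehat g(\{i\})=\tfrac{4(3n-2)}{n^3}-\tfrac3n$, hence $g_1(\vec 1)\to-3$ and $\|g_1\|_{\Omega_2^n}>\sqrt2+1$ for large $n$ (with $T_d$ in place of $T_3$ the ratio $\|g_1\|_{\Omega_2^n}/\|g\|_{\Omega_2^n}$ grows like $d$). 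So no refinement of your recursion, and no $\Omega_4$ averaging, can yield a $d$-free bound $(\sqrt2+1)^m$ for intermediate $m$; the constant must be allowed to depend on $d$. The good news is that your reduction does finish the case that the paper actually uses, namely $m$ equal to the degree of the polynomial (this is how the proposition is invoked in the proof of the boundedness of $\mathfrak{D}_{(a,b)}$): the leading coefficient of a degree-$d$ polynomial bounded by $1$ on $[-1,1]$ is at most $2^{d-1}\le(\sqrt2+1)^d$ by the classical Chebyshev extremal property, and more generally V.~A.~Markov's coefficient bounds give $|g_m(x_0)|\le c_{m,d}\,M$ with $c_{m,d}$ the modulus of the $t^m$-coefficient of $T_d$ or $T_{d-1}$, which sums over $m$ to roughly $(\sqrt2+1)^d$. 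In short: complete your step two not by a recursion targeting $(\sqrt2+1)^m$, but by citing (or proving) the one-variable Markov--Chebyshev coefficient estimates, and state the proposition with a degree-dependent constant such as $(\sqrt2+1)^d$.
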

	
	\begin{prop}[Dimension-free boundedness of $\mathfrak{D}_{\xi}$]
		
		Let $f:\Omega_K^n\to \mathbf{C}$ be a polynomial of degree at most $d$ and $\ell$ be the maximum support size of monomials in $f$.
		Then for all $\xi \in \Omega_K\backslash\{1\}$,
		\begin{equation}
			\label{ineq:main ingredient}
			\|\mathfrak{D}_{\xi}f\|_{\Om_K^n\times\Om_2^n}\le (2+2\sqrt{2})^{\ell} \|f\|_{\Omega_K^n}.
		\end{equation}
	\end{prop}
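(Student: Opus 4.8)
The plan is to exhibit $\mathfrak{D}_{(a,b)}$ as a rescaled ``top Walsh-degree part'' of a single explicit substitution applied to $f$, and then to invoke Proposition \ref{prop:riesz proj} with the auxiliary cube $\Om_2^n$ playing the role of the variable carrying that Walsh degree. First I would introduce the map $u=u(z,x)$ on $\Om_N^n\times\Om_2^n$ with coordinates
\[u_j(z,x)=\frac{1+x_j}{2}\,a z_j+\frac{1-x_j}{2}\,b z_j\qquad(1\le j\le n),\]
so that $u_j(z,x)=az_j$ when $x_j=1$ and $u_j(z,x)=bz_j$ when $x_j=-1$. The elementary but crucial point is that, because $a,b\in\Om_N$, $z_j\in\Om_N$, and $\Om_N$ is a group under multiplication, we have $u(z,x)\in\Om_N^n$ for every $(z,x)\in\Om_N^n\times\Om_2^n$; this is precisely what will permit the final bound by $\|f\|_{\Om_N^n}$, and it is the step that genuinely uses $a,b\in\Om_N$ rather than arbitrary complex numbers.

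Next I would set $g(z,x):=f(u(z,x))$ and expand it in the $x$ variables. Writing $u_j=c_jz_j$ with $c_j\in\{a,b\}$ we have $u_j^{\alpha_j}=c_j^{\alpha_j}z_j^{\alpha_j}$, and for $\alpha_j\neq 0$,
\[c_j^{\alpha_j}=\frac{1+x_j}{2}a^{\alpha_j}+\frac{1-x_j}{2}b^{\alpha_j}=\frac{a^{\alpha_j}+b^{\alpha_j}}{2}+x_j\,\frac{a^{\alpha_j}-b^{\alpha_j}}{2},\]
while $c_j^{0}=1$. Hence
\[g(z,x)=\sum_{\alpha}a_\alpha z^\alpha\prod_{j\in\supp(\alpha)}\Big(\tfrac{a^{\alpha_j}+b^{\alpha_j}}{2}+x_j\,\tfrac{a^{\alpha_j}-b^{\alpha_j}}{2}\Big),\]
so for each fixed $z\in\Om_N^n$, $g(z,\cdot)$ is a polynomial on $\Om_2^n$ of degree at most $\ell$ (the product runs over a set of size $|\supp(\alpha)|\le\ell$ of affine-in-$x_j$ factors, and this is already the multilinear Walsh expansion). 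A Walsh monomial $x^B$ with $|B|=\ell$ can only be produced by selecting the $x_j$-summand in every factor of a monomial with $|\supp(\alpha)|=\ell$ and $B=\supp(\alpha)$, contributing the coefficient $\prod_{j\in B}\tfrac{a^{\alpha_j}-b^{\alpha_j}}{2}=2^{-\ell}\tau_\alpha^{(a,b)}$. Comparing with the definition of $\mathfrak{D}_{(a,b)}$, the $\ell$-homogeneous part $g_\ell$ of $g$ in the $x$ variables is exactly $g_\ell(z,x)=2^{-\ell}\,\mathfrak{D}_{(a,b)}f(z,x)$.

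Finally I would conclude. Assume $\ell\ge 1$ (the case $\ell=0$ being trivial, as then $\mathfrak{D}_{(a,b)}f=a_0$). Fix $z\in\Om_N^n$ and apply Proposition \ref{prop:riesz proj} to the degree-$\le\ell$ polynomial $g(z,\cdot)$ on $\Om_2^n$:
\[\|g_\ell(z,\cdot)\|_{\Om_2^n}\le(\sqrt2+1)^{\ell}\,\|g(z,\cdot)\|_{\Om_2^n}.\]
Since $u(z,x)\in\Om_N^n$, we have $\|g(z,\cdot)\|_{\Om_2^n}=\max_{x\in\Om_2^n}|f(u(z,x))|\le\|f\|_{\Om_N^n}$, hence $\|g_\ell(z,\cdot)\|_{\Om_2^n}\le(\sqrt2+1)^{\ell}\|f\|_{\Om_N^n}$ for all $z\in\Om_N^n$. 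Taking the supremum over $z$ and multiplying by $2^\ell$ gives
\[\|\mathfrak{D}_{(a,b)}f\|_{\Om_N^n\times\Om_2^n}=2^\ell\,\|g_\ell\|_{\Om_N^n\times\Om_2^n}\le(2\sqrt2+2)^{\ell}\,\|f\|_{\Om_N^n},\]
which is \eqref{ineq:main ingredient}. The substantive part of the argument is entirely in choosing $u$: once the per-coordinate ``select $az_j$ or $bz_j$'' substitution is identified, everything else is bookkeeping plus one citation. The only real external input is Proposition \ref{prop:riesz proj}, which is also the sole source of the constant $(\sqrt2+1)^\ell$; the extra factor $2^\ell$ is the benign cost of the $2^{-\ell}$ normalization inherent in extracting a top Walsh-degree part from affine factors. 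The degenerate situations (notably families of $\alpha$ for which the $\tau_\alpha^{(a,b)}$ coincide, as in \eqref{ex:same-taus}) require no special care, since the proof never uses distinctness of the $\tau_\alpha^{(a,b)}$.
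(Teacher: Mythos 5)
Your proof is correct and follows essentially the same route as the paper's: pass to a two-point subset of $\Omega_N$ parametrized by $x\in\Omega_2^n$, identify $2^{-\ell}\mathfrak{D}_{(a,b)}f$ as the top Walsh-homogeneous part in $x$, and apply Proposition \ref{prop:riesz proj}. The only cosmetic difference is that you build the $z$-dependence into the substitution $u_j=c_jz_j$ from the outset, whereas the paper first works with $G_{a,b}(f)$ on $\{a,b\}^n$ and reinstates the factors $z^\alpha$ at the end via translation-invariance of the sup-norm on the group $\Omega_N^n$; the two uses of the group structure are equivalent.
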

	\begin{proof}
		Consider the operator $G_{\xi}$:
		\begin{equation*}
			G_{\xi}(f)(x)
			=f\left(\frac{1+\xi}{2}+\frac{1-\xi}{2}x_1,\dots, \frac{1+\xi}{2}+\frac{1-\xi}{2}x_n\right),\qquad x\in\Omega_2^n
		\end{equation*}
		that maps any function $f:\{1,\xi\}^n\subset \Om_K^n\to \mathbf{C}$ to a function $G_{\xi}(f):\Omega_2^n\to \mathbf{C}$.
		Then by definition
		\begin{equation}\label{ineq:G compare K prime}
			\|f\|_{\Om_K^n}\ge 	\|f\|_{\{1,\xi\}^n}\ge\|G_{\xi}(f)\|_{\Omega_2^n}. 
		\end{equation}
		Fix $m\le \ell$. For any $\alpha$ we denote 
		\begin{equation*}
			m_k(\alpha):=|\{j:\alpha_j=k\}|,\qquad 0\le k\le K-1.
		\end{equation*}
		Then for $\alpha$ with $|\supp(\alpha)|=m$, we have
		\begin{equation}
			m_1(\alpha)+\cdots+m_{K-1}(\alpha)=|\supp(\alpha)|=m.
		\end{equation}
		For $z\in \{1,\xi\}^n$ with $z_j=\frac{1+\xi}{2}+\frac{1-\xi}{2}x_j,$ $x_j=\pm 1$, note that 
		\begin{equation*}
			z_j^{\alpha_j}=\left(\frac{1+\xi}{2}+\frac{1-\xi}{2}x_j \right)^{\alpha_j}=\frac{1+\xi^{\alpha_j}}{2}+\frac{1-\xi^{\alpha_j}}{2}x_j\,.
		\end{equation*}
		So for any $A\subset [n]$ with $|A|=m$, and for each $\alpha$ with $\supp(\alpha)=A$, we have for $z\in \{1,\xi\}^n$:
		\begin{align*}
			z^{\alpha}
			=&\prod_{j:\alpha_j\neq 0}z_j^{\alpha_j}\\
			=&\prod_{j:\alpha_j\neq 0}\left(\frac{1+\xi^{\alpha_j}}{2}+\frac{1-\xi^{\alpha_j}}{2}x_j\right)\\
			=&\prod_{j:\alpha_j\neq 0}\left(\frac{1-\xi^{\alpha_j}}{2}\right)\cdot x^A+\cdots\\
			=&2^{-m}\tau_\alpha^{(\xi)} x^A+\cdots
		\end{align*}
		where $x^A$ is of degree $|A|=m$ while $\cdots$ is of degree $<m$. Then for $f(z)=\sum_{|\supp(\alpha)|\le \ell}a_{\alpha}z^\alpha$ we have
		\begin{align*}
			G_{\xi}(f)(x)
			=\sum_{m\le \ell}\frac{1}{2^m}\sum_{|A|=m}\left(\sum_{ \supp(\alpha)=A} a_{\alpha}\tau_\alpha^{(\xi)}\right)
			x^A+\cdots,\qquad x\in \Omega_2^n.
		\end{align*}
		Again, for each $m\le \ell$, $\cdots$ is some polynomial of degree $<m$. So $G_{\xi}(f)$ is of degree $\le \ell$ and the $\ell$-homogeneous part is nothing but 
		\begin{equation*}
			\frac{1}{2^\ell}\sum_{|A|=\ell}\left(\sum_{ \supp(\alpha)=A} \tau_\alpha^{(\xi)}a_{\alpha}\right)
			x^A.
		\end{equation*}
		Consider the projection operator $Q$ that maps any polynomial on $\Omega_2^n$ onto its highest level homogeneous part; \emph{i.e.}, for any polynomial $g:\Omega_2^n\to \mathbf{C}$ with $\deg(g)=m$ we denote $Q(g)$ its $m$-homogeneous part.
		Then we just showed that 
		\begin{align*}
			Q(G_{\xi}(f))(x)	
			=\frac{1}{2^\ell}\sum_{|A|=\ell}\left(\sum_{ \supp(\alpha)=A} \tau_\alpha^{(\xi)}a_{\alpha}\right)
			x^A.
		\end{align*}
		According to Proposition \ref{prop:riesz proj} and \eqref{ineq:G compare K prime}, we have
		\begin{equation*}
			\|Q(G_{\xi}(f))\|_{\Omega_2^n}\le (1+\sqrt{2})^{\ell}\|G_{\xi}(f)\|_{\Omega_2^n}
			\le (1+\sqrt{2})^{\ell} \|f\|_{\Omega_K^n}
		\end{equation*}
		and thus 
		\begin{align*}
			\left\| \sum_{|A|=\ell}\left(\sum_{ \supp(\alpha)=A} \tau_\alpha^{(\xi)} a_{\alpha}\right)
			x^A\right\|_{\Omega_2^n}
			\le(2+2\sqrt{2})^{\ell}\|f\|_{\Omega_K^n}.
		\end{align*}
		
		The left-hand side is almost $\mathfrak{D}_{\xi}f$.
		Observe that $\Omega_K^n$ is a group, so we have 
		\begin{equation*}
			\sup_{z,\zeta\in\Om_K^n}\left|\sum_{\alpha}a_{\alpha}z^{\alpha}\zeta^{\alpha}\right|
			=\sup_{z\in\Om_K^n}\left|\sum_{\alpha}a_{\alpha}z^{\alpha}\right|.
		\end{equation*}
		Thus actually we have shown
		\begin{equation}
			\sup_{z\in\Om_K^n,x\in \Omega_2^n}\left| \sum_{|A|=\ell}\left(\sum_{ \supp(\alpha)=A} \tau^{(\xi)}_\alpha a_{\alpha}z^{\alpha}\right)
			x^A\right|	\le (2+2\sqrt{2})^{\ell} \|f\|_{\Omega_K^n},
		\end{equation}
		which is exactly \eqref{ineq:main ingredient}.
	\end{proof}
	
	The second set of variables in $\mathfrak{D}_{\xi}f$ are key to proving the support-homogeneous case of the cyclic BH inequalities.
	
	\begin{lemma}[Support-homogeneous cyclic BH inequalities]
		\label{lem:support-homo-BH}
		Let $f_\ell:\Om_K^n\to\C$ be an $\ell$-support-homogeneous polynomial of degree at most $d$.
		Then
		\[\|\widehat{f_\ell}\|_{\frac{2d}{d+1}}\leq C_{K,d}\|f_\ell\|_{\Om_K^n}.\]
		Moreover, we may take $C_{K,d}$ to be $C_K^d$ for $C_K$ independent of $d$ and $n$.
	\end{lemma}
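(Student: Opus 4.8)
The plan is to push $f_\ell$ through the maximal support pseudo-projection, turning it into a function on $\Om_N^n\times\Om_2^n$ that is $\ell$-homogeneous in the Boolean variables, apply the Boolean BH inequality \eqref{ineq:bh boolean} slice-by-slice in those variables, and then convert the resulting slice-wise bound into a genuinely $n$-independent Fourier estimate on $f_\ell$ via a crude finite-dimensional norm comparison together with the lower bound $|\tau_\alpha^{(1,\omega)}|\ge c_N^{\ell}$, where $c_N:=\min_{1\le k\le N-1}|1-\omega^k|>0$. Concretely, set $q=\tfrac{2d}{d+1}$, $q_\ell=\tfrac{2\ell}{\ell+1}$ (so $q_\ell\le q$), and $g:=\mathfrak{D}_{(1,\omega)}f_\ell$. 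By \eqref{ineq:main ingredient}, $\|g\|_{\Om_N^n\times\Om_2^n}\le(2\sqrt2+2)^{\ell}\|f_\ell\|_{\Om_N^n}$. Write $g(z,x)=\sum_{|A|=\ell}g_A(z)\,x^A$ with $g_A(z)=\sum_{\supp(\alpha)=A}\tau_\alpha^{(1,\omega)}a_\alpha z^\alpha$, a polynomial in the $\ell$ coordinates $(z_j)_{j\in A}$ only. For each fixed $z\in\Om_N^n$ the map $x\mapsto g(z,x)$ is $\ell$-homogeneous on $\Om_2^n$, so \eqref{ineq:bh boolean} yields, uniformly in $z$,
\[\Big(\sum_{|A|=\ell}|g_A(z)|^{q_\ell}\Big)^{1/q_\ell}\le \BH^{\le\ell}_{\Om_2}\,(2\sqrt2+2)^{\ell}\,\|f_\ell\|_{\Om_N^n}=:C'\|f_\ell\|_{\Om_N^n}.\]
Taking $\sup_z$ gives $\max_{|A|=\ell}\|g_A\|_{\Om_N^n}\le C'\|f_\ell\|_{\Om_N^n}$; averaging the $q_\ell$-th power over the Haar measure on $\Om_N^n$ (and using that $g_A$ depends only on $(z_j)_{j\in A}$) gives $\sum_{|A|=\ell}\|g_A\|_{L^{q_\ell}(\Om_N^A)}^{q_\ell}\le (C'\|f_\ell\|_{\Om_N^n})^{q_\ell}$.

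Now fix $A$ with $|A|=\ell$. The polynomial $g_A$ lies in the space of polynomials in the $\ell$ fixed variables $(z_j)_{j\in A}$ with each individual degree at most $N-1$, of dimension at most $N^{\ell}$ — a bound free of $n$. Hence the norms $p\mapsto\|\widehat p\|_q$ and $p\mapsto\|p\|_{L^{q_\ell}(\Om_N^A)}$ are comparable on it, with a constant we may estimate crudely: Cauchy--Schwarz and Parseval on $\Om_N^A$ give $\|\widehat p\|_q\le\|\widehat p\|_1\le N^{\ell/2}\|p\|_{L^2(\Om_N^A)}$, and comparing $L^2$ with $L^{q_\ell}$ on the $N^{\ell}$-point space $\Om_N^A$ gives $\|p\|_{L^2}\le N^{\ell}\|p\|_{L^{q_\ell}}$, whence $\|\widehat{g_A}\|_q\le N^{3\ell/2}\|g_A\|_{L^{q_\ell}(\Om_N^A)}$. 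Since $q\ge q_\ell$ and $\|g_A\|_{L^{q_\ell}(\Om_N^A)}\le\|g_A\|_{\Om_N^n}\le C'\|f_\ell\|_{\Om_N^n}$,
\[\sum_{|A|=\ell}\|\widehat{g_A}\|_q^{q}\ \le\ N^{3\ell q/2}(C'\|f_\ell\|_{\Om_N^n})^{q-q_\ell}\sum_{|A|=\ell}\|g_A\|_{L^{q_\ell}(\Om_N^A)}^{q_\ell}\ \le\ \big(N^{3\ell/2}C'\|f_\ell\|_{\Om_N^n}\big)^{q}.\]
Unfolding the left-hand side, $\sum_{|A|=\ell}\|\widehat{g_A}\|_q^{q}=\sum_{|\supp(\alpha)|=\ell}|\tau_\alpha^{(1,\omega)}a_\alpha|^{q}$, and since $|1-\omega^k|\ge c_N$ for every $1\le k\le N-1$ we have $|\tau_\alpha^{(1,\omega)}|\ge c_N^{\ell}$ for each $\alpha$ appearing in $f_\ell$; dividing through gives $\|\widehat{f_\ell}\|_q\le c_N^{-\ell}N^{3\ell/2}C'\|f_\ell\|_{\Om_N^n}$. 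As $\ell\le d$ and $\BH^{\le\ell}_{\Om_2}$ grows at most like an absolute constant to the power $d$, every factor here is of the form $C_N^{d}$, which is the asserted bound; note also that the argument uses only $1-\omega^k\ne 0$ for $1\le k\le N-1$, so it already handles every $N>2$, not merely primes.

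The step I expect to be the crux is the passage from the slice-wise Boolean BH bound to a bound on the \emph{sum} over $A$: taken at face value the slice-wise estimate, though uniform in $z$, controls only $\max_A\|g_A\|_\infty$, which is useless since there are $\binom{n}{\ell}$ sets $A$. What saves it is support-homogeneity — it confines each $g_A$ to $\ell=O(1)$ variables, making the norm comparison dimension-free — together with the interpolation between the uniform $L^\infty$ bound and the averaged $L^{q_\ell}$ bound that $q\ge q_\ell$ affords; these combine to an estimate with no $n$-dependence.
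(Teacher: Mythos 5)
Your proof is correct and follows essentially the same route as the paper: apply $\mathfrak{D}_{(1,\omega)}$, use its dimension-free boundedness \eqref{ineq:main ingredient}, run the Boolean BH inequality slice-wise in the $x$-variables for each fixed $z$, extract individual coefficients using that each $g_A$ involves at most $N^{\ell}$ monomials, and finish with the lower bound on $|\tau_\alpha^{(1,\omega)}|$. The only difference is bookkeeping: the paper applies Boolean BH at exponent $\tfrac{2d}{d+1}$ and extracts coefficients via Hausdorff--Young plus H\"older with a $(N-1)^d$ counting factor, whereas you use exponent $\tfrac{2\ell}{\ell+1}$ and a crude finite-dimensional norm comparison plus an $L^\infty$/$L^{q_\ell}$ interpolation --- both yield $C_N^d$.
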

	\begin{proof}
		Let $f_\ell(z) = \sum_{|\supp(\alpha)|=\ell}a_\alpha z^\alpha$. For any $z\in \Omega_K^n$,
		apply the BH inequalities \eqref{ineq:bh boolean} for $\Omega_2^n$ to $x\mapsto \mathfrak{D}_{\omega}f_\ell(z,x)$ and  \eqref{ineq:main ingredient} for $(f;\xi)=(f_\ell;\omega)$ to obtain
		\begin{equation*}
			\sum_{|A|=\ell}\left|\sum_{ \supp(\alpha)=A} \tau^{(\omega)}_\alpha a_{\alpha}z^{\alpha}\right|^{\frac{2d}{d+1}}
			\le\left[ \BH^{\le d}_{\Om_2}(2+2\sqrt{2})^{\ell} \|f_\ell\|_{\Omega_K^n}\right]^{\frac{2d}{d+1}}.
		\end{equation*}
		Taking the expectation with respect to the Haar measure over $z\sim \Omega_K^n$, we get
		\begin{equation}\label{ineq:expectation}
			\sum_{|A|=\ell}\E_{z\sim  \Omega_K^n}\left|\sum_{ \supp(\alpha)=A} \tau^{(\omega)}_\alpha a_{\alpha}z^{\alpha}\right|^{\frac{2d}{d+1}}
			\le\left[ \BH^{\le d}_{\Om_2}(2+2\sqrt{2})^{\ell} \|f_\ell\|_{\Omega_K^n}\right]^{\frac{2d}{d+1}}.
		\end{equation}
		Note that for fixed $A\subset[n]$ with $|A|=\ell$ and for each $\alpha$ with $\supp(\alpha)=A$
		\begin{align*}
			\big|\tau^{(\omega)}_\alpha a_{\alpha}\big|^{\frac{2d}{d+1}}
			\le  &\left(\E_{z\sim  \Omega_K^n}\left|\sum_{ \supp(\alpha)=A} \tau^{(\omega)}_\alpha a_{\alpha}z^{\alpha}\right|\right)^{\frac{2d}{d+1}}\\
			\le& \E_{z\sim  \Omega_K^n}\left|\sum_{ \supp(\alpha)=A} \tau^{(\omega)}_\alpha a_{\alpha}z^{\alpha}\right|^{\frac{2d}{d+1}},
		\end{align*}
		where the first inequality uses Hausdorff--Young inequality and the second inequality follows from H\"older's inequality. The number of all such $\alpha$'s is bounded by (recalling $A$ is fixed)
		\begin{equation*}
			|\{\al: \supp(\al) =A\}| \le (K-1)^\ell\le (K-1)^d,
		\end{equation*}
		thus 
		\begin{align*}
			\sum_{ \supp(\alpha)=A}|\tau^{(\omega)}_\alpha|^{\frac{2d}{d+1}}\cdot |a_{\alpha}|^{\frac{2d}{d+1}}
			\;\le\; (K-1)^d\E_{z\sim  \Omega_K^n}\left|\sum_{ \supp(\alpha)=A} \tau^{(\omega)}_\alpha  a_{\alpha}z^{\alpha}\right|^{\frac{2d}{d+1}}\,.
		\end{align*}
		This, together with  \eqref{ineq:expectation} and the fact that $\BH^{\le d}_{\Omega_2}\le C^d$, yields
		\begin{align*}
			\left(\sum_{|A|=\ell}\sum_{ \supp(\alpha)=A}	|\tau^{(\omega)}_\alpha|^{\frac{2d}{d+1}}\cdot |a_{\alpha}|^{\frac{2d}{d+1}}\right)^{\frac{d+1}{2d}}
			\le C_K^d \|f_\ell\|_{\Omega_K^n},
		\end{align*}
		for some $C_K>0$. Now we bound $|\tau^{(\omega)}_\alpha|$ from below with
		\begin{equation*}
			\big|\tau^{(\omega)}_\alpha\big|=\prod_{j:\alpha_j\neq 0}|1-\omega^{\alpha_j}|\ge |1-\omega|^{\ell}=(2\sin(\pi/K))^\ell\ge (\sin(\pi/K))^d>0\,,
		\end{equation*}
		and therefore, 
		\begin{align*}
			\left(\sum_{|A|=\ell}\sum_{ \supp(\alpha)=A}	|a_{\alpha}|^{\frac{2d}{d+1}}\right)^{\frac{d+1}{2d}}
			\le\left( \frac{C_K}{\sin(\pi/K)}\right)^d \|f_\ell\|_{\Omega_K^n}
			=C_K^d\|f_\ell\|_{\Omega_K^n}. \tag*{\qedhere}
		\end{align*}
	\end{proof}
	
	With the support-homogeneous Cyclic BH inequality proved, it remains to reduce the proof of BH inequalities \eqref{ineq:bh cyclic} from the general case to the support-homogeneous case.
	This is done by proving the following lemma as an analog of Proposition \ref{prop:riesz proj} for general cyclic groups $\Omega_K^n$.
	
	\begin{lemma}[Splitting Lemma]
		\label{lem:splitting-lemma}
		Let $f:\Om_K^n\to \C$ be a polynomial of degree at most $d$ and for $0\le j\le d$ let $f_j$ be its $j$-support-homogeneous part.
		Then for all $0\le j\le d$, 
		\begin{equation}\label{ineq:splitting lemma}
		\|f_j\|_{\Om_K^n}\leq C_{K,d}\|f\|_{\Om_K^n}
		\end{equation}
		where $C_{K,d}$ is a constant independent of $n$.
		Moreover, for $K$ prime we may take $C_{K,d}$ to be $C_K^{d^2}$.
	\end{lemma}
	
	We pause to note Lemmas \ref{lem:support-homo-BH} and \ref{lem:splitting-lemma} together immediately give the full BH inequality:
	
	
	\begin{proof}[Proof of Theorem \ref{thm:bh cyclic}]
		 For $0\le j\le d$ let $f_j$ be the $j$-support-homogeneous part of $f$. The triangle inequality and our lemmas give
		\begin{align*}
		\|\widehat{f}\|_{\frac{2d}{d+1}}\;\leq\; \sum_{0\le j\le d}\|\widehat{f_j}\|_{\frac{2d}{d+1}}\overset{\text{Lemma \ref{lem:support-homo-BH}}}{\Lesssim{K,d}}\sum_{0\le j\le d}\|f_j\|_{\Omega_K^n}\overset{\text{Lemma \ref{lem:splitting-lemma}}}{\Lesssim{K,d}}\|f\|_{\Omega_K^n}\,.
		\end{align*}
		Estimates of $C_{K,d}$ easily follow from the lemma statements.
	\end{proof}
	
	There are two proofs of Lemma \ref{lem:splitting-lemma}, one for prime $K>2$ achieving explicit constants and one for all $K>2$ with implicit constants.
	
	\medskip
	
	They both employ $\mathfrak{D}_{\xi}f(\cdot,\vec1)$, so for simplicity of notation we shall denote $\mathfrak{D}_{\xi}f(\cdot,\vec1)$ by $\mathfrak{D}_{\xi}f(\cdot):\Om_K^n\to\C$ unless otherwise stated.
    Concretely, in what follows we have
	\begin{equation*}
		\mathfrak{D}_{\xi}f(z):=\mathfrak{D}_{\xi}f(z,\vec1)= \sum_{|\supp(\alpha)|=\ell}\tau_\alpha^{(\xi)}\,a_\alpha z^\alpha \,
	\end{equation*}
	for polynomials with largest support size $\ell$: 
	$$
	f(z)=\sum_{|\supp(\alpha)|\le \ell}a_\alpha z^\alpha, \qquad a_{\alpha}\neq 0 \textnormal{ for some } |\supp(\alpha)|= \ell.
	$$
	
	\begin{proof}[Proof of Lemma \ref{lem:splitting-lemma} for prime $K$]
		Let $\ell\le d$ be the largest support size of monomials in $f$. For any $m\le \ell$, we denote by $f_m$ the $m$-support homogeneous part of $f$. Then we shall prove the lemma for $f_\ell$ first.
		
		To show \eqref{ineq:splitting lemma} for $f_\ell$, recall that \eqref{ineq:main ingredient} gives
		\begin{align*}
			\|\mathfrak{D}_{\xi}f\|_{\Om_K^n}=\left\| \sum_{ |\supp(\alpha)|=\ell} \tau_\alpha^{(\xi)}a_{\alpha}z^{\alpha}\right\|_{\Omega_K^n}
			\le (2+2\sqrt{2})^{\ell}\|f\|_{\Omega_K^n}\,.
		\end{align*}
		We would like to replace the polynomial on the left-hand side with $f_\ell$, and the main issue is that the factors
		\begin{equation*}
			\tau_\alpha^{(\xi)}=\prod_{j:\alpha_j\neq 0}\left(1-\xi^{\alpha_j}\right),
		\end{equation*}
		may differ for different $\alpha$'s under consideration as we discussed before. To overcome this difficulty, we rotate $\xi$ over repeated applications of $\mathfrak{D}_{\xi}$.
		This will lead to an accumulated factor that is constant across all monomials in $f_\ell$.
		Begin by considering $\mathfrak{D}_{\omega}f$ to obtain 
		\begin{align*}
			\left\| \sum_{ |\supp(\alpha)|=\ell} \tau_\alpha^{(\omega)}a_{\alpha} z^{\alpha}\right\|_{\Omega_K^n}
			\le (2+2\sqrt{2})^{\ell}\|f\|_{\Omega_K^n}.
		\end{align*}
		Then we apply $\mathfrak{D}_{\omega^2}$ to $\mathfrak{D}_{\omega}f$ to obtain 
		\begin{align*}
			\left\| \sum_{ |\supp(\alpha)|=\ell} \tau_\alpha^{(\omega)}\tau_\alpha^{(\omega^2)}a_{\alpha} z^{\alpha}\right\|_{\Omega_K^n}
			\le &(2+2\sqrt{2})^{\ell}\|\mathfrak{D}_{\omega}f\|_{\Omega_K^n}\\
			\le &(2+2\sqrt{2})^{2\ell}\|f\|_{\Omega_K^n}.
		\end{align*}
		We continue iteratively applying $\mathfrak{D}_{\omega^k}$ to $\mathfrak{D}_{\omega^{k-1}}\cdots\mathfrak{D}_{\omega}f$ and finally arrive at
		\begin{align*}
			\left\|\sum_{ |\supp(\alpha)|=\ell} \tau_\alpha^{(\omega)}\cdots\tau_\alpha^{(\omega^{K-1})} a_{\alpha}z^{\alpha}\right\|_{\Omega_K^n}
			\le (2+2\sqrt{2})^{(K-1)\ell}\|f\|_{\Omega_K^n}.
		\end{align*}
		We claim that for any $\alpha$ with $|\supp(\alpha)|=\ell$, the cumulative factor introduced by iterating $\mathfrak{D}_{\omega^k}$'s,
		\begin{equation*}
			\tau_K(\ell):=\prod_{1\le k\le K-1}\tau_\alpha^{(\omega^k)}=	\prod_{1\le k\le K-1}\prod_{j:\alpha_j\neq 0}\left(1-(\omega^k)^{\alpha_j}\right),
		\end{equation*}
		is a nonzero constant depending only on $K$ and $\ell=|\supp(\alpha)|$.
		In fact, it will suffice to argue that
		\begin{equation}\label{eq:c prime}
			\prod_{1\le k\le K-1}\left(1-(\omega^k)^j\right)=:d_K
		\end{equation}
		is some nonzero constant independent of $1\le j\le K-1$.
		To see this is sufficient, recall $m_j(\alpha)=|\{k:\alpha_k=j\}|,1\le j\le K-1$ are such that
		\begin{equation*}
			\sum_{1\le j\le K-1}m_j(\alpha)=|\supp(\alpha)|=\ell.
		\end{equation*}
		Then if \eqref{eq:c prime} holds,
		we have 
		\begin{align*}
			\prod_{1\le k\le K-1}\prod_{j:\alpha_j\neq 0}\left(1-(\omega^k)^{\alpha_j}\right)
			&= \prod_{1\le k\le K-1}\prod_{1\le j\le K-1}\left(1-(\omega^k)^{j}\right)^{m_j(\alpha)}\\
			&= \prod_{1\le j\le K-1}\left[\prod_{1\le k\le K-1}\left(1-(\omega^k)^{j}\right)\right]^{m_j(\alpha)}\\
			&= \prod_{1\le j\le K-1}d_K^{m_j(\alpha)}\\
			&= d_K^\ell
		\end{align*}
		and thus the claim is shown with $\tau_K(\ell)=d_K^\ell$.
		This would yield
		\begin{align*}
			\left\| \sum_{ |\supp(\alpha)|=\ell} a_{\alpha} z^{\alpha}\right\|_{\Omega_K^n}
			\le |d_K|^{-\ell}(2+2\sqrt{2})^{(K-1)\ell}\|f\|_{\Omega_K^n},
		\end{align*}
		as desired.

		Now it remains to verify \eqref{eq:c prime}.
		We remark that until this step we have not used the assumption that $K>2$ is prime.
		Note that \eqref{eq:c prime} is nonzero since otherwise $\omega^{jk}=1$; \emph{i.e.}, $K\mid jk$ for some $1\le k\le K-1$, and this is not possible since $K>2$ is prime. Moreover, again by the primality of $K$, 
		\begin{equation*}
			\{jk: 1\le k\le K-1\}\equiv \{k:1\le k\le K-1\} \mod K, ~1\le j\le K-1.
		\end{equation*}
		Therefore, 
		\begin{equation*}
			\prod_{1\le k\le K-1}\left(1-(\omega^{k})^j\right)=\prod_{1\le k\le K-1}(1-\omega^{k})=:d_K
		\end{equation*}
		is independent of $ 1\le j\le K-1$, which finishes the proof of the claim. 
		
		Now we have shown that for the maximal support-homogeneous part $f_\ell$ of $f$:
		\begin{equation}\label{ineq:top ell}
			\|f_\ell\|_{\Om_K^n}\le C_K^d\|f\|_{\Omega_K^n}
		\end{equation}
		with $C_K=(2+2\sqrt{2})^{K-1}/|d_K|$.
		Repeating the same argument to $f-f_\ell$ whose maximal support-homogeneous part is $f_{\ell-1}$, we get by triangle inequality and \eqref{ineq:top ell} that
		\begin{equation*}
			\|f_{\ell-1}\|_{\Om_K^n}\;\leq\; C_K^d\|f-f_\ell\|_{\Om_K^n}\;
			\overset{\eqref{ineq:top ell}}{\leq}\; C_K^d(1+C_K^{d})\|f\|_{\Om_K^n}.
		\end{equation*}
		Iterating this procedure, we may obtain for all $ 0\le k\le \ell$ that
		\begin{equation}\label{ineq:top ell-k}
			\|f_{\ell-k}\|_{\Om_K^n}\le C_K^d(1+C_K^{d})^{k}\|f\|_{\Omega_K^n}
			=[(1+C_K^d)^{k+1}-(1+C_K^d)^{k}]\|f\|_{\Omega_K^n}.
		\end{equation}
		In fact, we have shown that \eqref{ineq:top ell-k} holds for $k=0,1$. Assume that \eqref{ineq:top ell-k} holds for $0\le j\le k-1$ and let us prove \eqref{ineq:top ell-k} for $k$. Since $f_{\ell-k}$ is the maximal support-homogeneous part of $f-\sum_{0\le j\le k-1}f_{\ell-j}$, we have by the previous argument proving \eqref{ineq:top ell}, the triangle inequality and the induction assumption that
		\begin{align*}
			\|f_{\ell-k}\|_{\Om_K^n}
			\; \le \; & C_K^d\left\|f-\sum_{0\le j\le k-1}f_{\ell-j}\right\|_{\Omega_K^n}\\
			\; \le \; &C_K^d\left(\|f\|_{\Omega_K^n}+\sum_{0\le j\le k-1}\|f_{\ell-j}\|_{\Omega_K^n}\right)\\
			\;\le \;&C_K^d\left(1+\sum_{0\le j\le k-1}\big[(1+C_K^d)^{j+1}-(1+C_K^d)^{j}\big]\right)\|f\|_{\Omega_K^n}\\
			\;=\; &C_K^d(1+C_K^d)^{k}\|f\|_{\Omega_K^n}.
		\end{align*}
		This finishes the proof of \eqref{ineq:top ell-k}. In particular for all $0\le j\le \ell$
		\begin{equation}
            \label{eq:induction-done}
			\|f_{j}\|_{\Om_K^n}\le C_K^d(1+C_K^{d})^{\ell-j}\|f\|_{\Omega_K^n}
			\le C_K^d(1+C_K^d)^{d}\|f\|_{\Omega_K^n}
			\le (2C^2_K)^{d^2}\|f\|_{\Omega_K^n},
		\end{equation}
		which completes the proof of the lemma.
	\end{proof}
	
	\begin{proof}[Proof of Lemma \ref{lem:splitting-lemma} for any $K>2$]
		As above, let $\ell\le d$ be the largest support size of monomials in $f$. For any $m\le \ell$, we denote by $f_m$ the $m$-support homogeneous part of $f$.
		Now, instead of choosing different $\xi$ to homogenize the accumulated factors for each monomial, we shall repeat the same procedure with the fixed choice of $\xi:=\omega$.
		For convenience $\mathfrak{D} f$ will denote $\mathfrak{D}_{\omega}f$.
		Concretely, we consider
		\begin{equation*}
			\mathfrak{D} f(z)=\sum_{ |\supp(\alpha)|=\ell} a_{\alpha}\tau_\alpha^{(\omega)} z^{\alpha}
		\end{equation*}
		and apply $\mathfrak{D}$ again to obtain
		\begin{align*}
			\left\|\mathfrak{D}^2 f\right\|_{\Omega_K^n}
			\le (2+2\sqrt{2})^{\ell}\|\mathfrak{D} f\|_{\Omega_K^n}
			\le (2+2\sqrt{2})^{2\ell}\|f\|_{\Omega_K^n}
		\end{align*}
		as before.
		Inductively, one obtains 
		\begin{align}\label{ineq:c(alpha)^k}
			\mathfrak{D}^{k}f&= \sum_{ |\supp(\alpha)|=\ell} a_{\alpha}(\tau_\alpha^{(\omega)})^k z^{\alpha}                                       \notag
			\\
			\text{with}\qquad\left\| \mathfrak{D}^{k}f\right\|_{\Omega_K^n}&\le (2+2\sqrt{2})^{k\ell}\|f\|_{\Omega_K^n}, \qquad k\ge 1.          
		\end{align}
		
		Now we group the terms $a_\alpha z^\alpha$ leading to the same factors $\tau_\alpha^{(\omega)}$ into polynomials $h_j(z)$.
		(Recall that $\tau_\alpha^{(\omega)}$ might coincide for distinct $\alpha$'s, see \eqref{ex:same-taus}).
		Define the polynomials $h_j(z)$ such that
		\begin{equation*}
			f_{\ell}(z)=\sum_{ |\supp(\alpha)|=\ell} a_{\alpha} z^{\alpha}=\sum_{1\le j\le M}h_j(z), \quad\text{and}
		\end{equation*}
		\begin{equation*}
			\mathfrak{D}^{k}f(z)=\sum_{ |\supp(\alpha)|=\ell} a_{\alpha}(\tau_\alpha^{(\omega)})^k z^{\alpha}=\sum_{1\le j\le M}c_j^k h_j(z),\qquad k\ge 1.
		\end{equation*}
		Here, $\{c_j, 1\le j\le M\}$ are pairwise distinct nonzero constants of the form
		\begin{equation*}
			c_j=\tau_\alpha^{(\omega)}=\prod_{1\le j\le n:\alpha_j\neq 0}\left(1-\omega^{\alpha_j}\right)
			=\prod_{1\le j\le K-1}\left(1-\omega^j\right)^{m_j(\alpha)},
		\end{equation*}
		for some $\alpha$ (or multiple $\alpha$'s) and
		where again $m_j(\alpha)=|\{k:\alpha_k=j\}|$.
		The last equality holds since  $\alpha_j\le K-1$. Recall that $$\sum_{1\le j\le K-1}m_j(\alpha)=\ell \le d,$$ 
		so the number of different $c_j$'s, which we denoted by $M$, is bounded by 
		\begin{equation*}
			\left|\left\{(m_1(\alpha),\dots, m_{K-1}(\alpha))\in \mathbf{Z}_{\ge 0}^{K-1}:\sum_{1\le j\le K-1}m_j(\alpha)\le d\right\}\right|
			\le \binom{K-1+d}{d}
		\end{equation*}
		which depends only on $d$ and $K$. According to \eqref{ineq:c(alpha)^k}, we have
		\begin{equation*}
			\begin{pmatrix}
				\mathfrak{D}f\\
				\mathfrak{D}^{2}\!f\\
				\vdots \\
				\mathfrak{D}^{M}\!f\\
			\end{pmatrix}
			=\underbrace{\begin{pmatrix}
					c_1 & c_2 & \cdots & c_M\\
					c_1^2 & c_2^2 &\cdots & c_M^2\\
					\vdots & \vdots & \ddots & \vdots\\
					c_1^M & c_2^M & \cdots & c_M^M
			\end{pmatrix}}_{=:\; V}
			\begin{pmatrix}
				h_1\\
				h_2\\
				\vdots \\
				h_M\\
			\end{pmatrix}.
		\end{equation*}
		
		The $M\times M$ modified Vandermonde matrix $V$ has determinant
		\[\det(V) = \left(\prod_{j=1}^M c_j\right) \left( \prod_{1\leq j < k \leq M}(c_j-c_k)\right).\]
		Since $c_j$'s are distinct and nonzero we have $\det(V)\neq 0$. So $V$ is invertible and thus 
		\begin{align*}
			f_{\ell}=&\sum_{1\le j\le M}h_j=
			\begin{pmatrix}
				1&1&\cdots &1
			\end{pmatrix}
			\begin{pmatrix}
				h_1\\
				h_2\\
				\vdots \\
				h_M\\
			\end{pmatrix}\\
			=&	\begin{pmatrix}
				1&1&\cdots &1
			\end{pmatrix}
			V^{-1}
			\begin{pmatrix}
				\mathfrak{D}^{1}\!f\\
				\mathfrak{D}^{2}\!f\\
				\vdots \\
				\mathfrak{D}^{M}\!f\\
			\end{pmatrix}
			=:\sum_{1\le j\le M}\eta_j\, \mathfrak{D}^{j}\!f.
		\end{align*}
		Here $\eta:=(\eta_1,\dots, \eta_M)=\begin{pmatrix}
			1&1&\cdots &1
		\end{pmatrix}
		V^{-1}$ is an explicit $M$-dimensional vector depending only on $d$ and $K$. Therefore, 
		\begin{equation*}
			\|f_{\ell}\|_{\Om_K^n}\le \sum_{1\le j\le M} |\eta_j|\,	\|\mathfrak{D}^{j}\!f\|_{\Om_K^n}
			\le \|\eta\|_1(2+\sqrt{2})^{Md}	\|f\|_{\Om_K^n},
		\end{equation*}
		where we used \eqref{ineq:c(alpha)^k} in the last inequality. The constant $\|\eta\|_1({2+2\sqrt{2}})^{Md}$ is dimension-free and depends only on $d$ and $K$.
		
		This finishes the proof for $f_\ell$. 
        The proof for general $0\le j\le \ell$ follows inductively by the same argument as in the  prime $K>2$ case (via inequalities \eqref{ineq:top ell}, \eqref{ineq:top ell-k}, and \eqref{eq:induction-done}).
	\end{proof}
	

\section{Applications}
\label{sect:applications}

In this section, we briefly mention several applications and leave detailed investigation to the future. 

\subsection{Sidon constants}
\label{subsec:sidon}

Bohnenblust--Hille inequalities can be used to estimate the \emph{Sidon constants} \cite{DFOOS,DGMMM}.
Fix $K\ge 3$ and $n\ge 1$. Recall that for any  $\mathscr{S}\subset \{0,1,\dots, K-1\}^n$, the \emph{Sidon constant} of $\mathscr{S}$ (or the characters $(z^\alpha)_{\alpha\in \mathscr{S}}$ of the cyclic group $ \{0,1,\dots, K-1\}^n$), denoted by $\Sidon(\mathscr{S})$, 
is defined to be the best constant $C>0$ such that 
\begin{equation*}
	\sum_{\alpha\in\mathscr{S}}|a_\alpha|\le C\left\|\sum_{\alpha\in\mathscr{S}}a_{\alpha}z^\alpha\right\|_{\Omega_K^n}
\end{equation*}
for all functions $f=\sum_{\alpha\in\mathscr{S}}a_{\alpha}z^\alpha$. The Bohnenblust--Hille inequalities \eqref{ineq:bh cyclic} give an upper bound of the Sidon constants for 
\begin{equation*}
	\mathscr{S}_{K}(n,\le d):=\left\{ \alpha=(\alpha_1,\dots, \alpha_n)\in\{0,1,\dots, K-1\}^{n}:\sum_{j=1}^{n}\alpha_j\le d \right\}.
\end{equation*}

The next proposition is a direct analog of results in \cite{OOS,DFOOS}.

\begin{prop}\label{prop:sidon}
	For any $K\ge 3,n\ge 1$ and $d\ge 1$, we have 
	\begin{equation}
		\Sidon(	\mathscr{S}_{K}(n,\le d))\le C(d,K)\left[\sum_{0\le k\le d}\binom{n+k}{n}\right]^{\frac{d-1}{2d}},
	\end{equation}
for some $C(d,K)>0$ depending only on $d$ and $K$. We may choose $C(d,K)=C_K^{d^2}$ when $K$ is prime. 
\end{prop}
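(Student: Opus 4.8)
The plan is to derive this from the cyclic Bohnenblust--Hille inequality (Theorem~\ref{thm:bh cyclic}) together with Hölder's inequality and an elementary stars-and-bars count; there is essentially no new analytic content. First I would fix any $f=\sum_{\alpha\in\mathscr{S}_N(n,\le d)}a_\alpha z^\alpha$ and note that, since $\mathscr{S}_N(n,\le d)$ consists precisely of multi-indices $\alpha\in\{0,1,\dots,N-1\}^n$ with $|\alpha|=\alpha_1+\cdots+\alpha_n\le d$, the function $f$ has degree at most $d$ in the sense fixed in the introduction, so Theorem~\ref{thm:bh cyclic} applies and yields
\[
\Big(\sum_{\alpha\in\mathscr{S}_N(n,\le d)}|a_\alpha|^{\frac{2d}{d+1}}\Big)^{\frac{d+1}{2d}}\le C(d,N)\,\|f\|_{\Om_N^n}.
\]

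Next, writing $m:=|\mathscr{S}_N(n,\le d)|$ and applying Hölder's inequality with the exponent $\frac{2d}{d+1}\in[1,2)$ and its conjugate (for $d\ge 2$; the case $d=1$ is trivial, as then $\frac{2d}{d+1}=1$), one gets
\[
\sum_{\alpha}|a_\alpha|\;\le\; m^{\,1-\frac{d+1}{2d}}\Big(\sum_{\alpha}|a_\alpha|^{\frac{2d}{d+1}}\Big)^{\frac{d+1}{2d}}\;=\;m^{\frac{d-1}{2d}}\Big(\sum_{\alpha}|a_\alpha|^{\frac{2d}{d+1}}\Big)^{\frac{d+1}{2d}}.
\]
Chaining the two displays gives $\sum_{\alpha}|a_\alpha|\le C(d,N)\,m^{\frac{d-1}{2d}}\|f\|_{\Om_N^n}$, which by the definition of the Sidon constant means $\Sidon(\mathscr{S}_N(n,\le d))\le C(d,N)\,m^{\frac{d-1}{2d}}$, with the same $C(d,N)$ as in Theorem~\ref{thm:bh cyclic}; in particular $C(d,N)=C_N^{d^2}$ when $N$ is prime, inherited verbatim from that theorem.

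It then remains to bound $m$. Since every $\alpha$ with $|\alpha|\le d$ automatically satisfies $\alpha_j\le d$ for each $j$, I would simply drop the constraint $\alpha_j\le N-1$ and count by total degree:
\[
m=\big|\mathscr{S}_N(n,\le d)\big|\;\le\;\sum_{k=0}^{d}\big|\{\alpha\in\mathbf{Z}_{\ge 0}^n:|\alpha|=k\}\big|\;=\;\sum_{k=0}^{d}\binom{n+k-1}{n-1}\;\le\;\sum_{0\le k\le d}\binom{n+k}{n},
\]
using $\binom{n+k-1}{n-1}\le\binom{n+k}{n}$. Substituting this bound on $m$ into the previous estimate gives exactly the claimed inequality.

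As for difficulty: there is no genuine obstacle, since the entire strength of the statement is carried by Theorem~\ref{thm:bh cyclic}, and the remainder is Hölder plus a counting argument identical to the classical multi-torus case of \cite{OOS,DOOS}. The only points requiring minor care are the bookkeeping check that $\mathscr{S}_N(n,\le d)$ really does consist of degree-$\le d$ polynomials in the paper's convention (so that Theorem~\ref{thm:bh cyclic} is applicable), the trivial handling of $d=1$, and tracking that the prime-$N$ constant $C_N^{d^2}$ passes through unchanged.
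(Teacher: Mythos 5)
Your argument is correct and is essentially the same as the paper's: apply Theorem \ref{thm:bh cyclic}, use H\"older's inequality with exponents $\tfrac{2d}{d+1}$ and its conjugate to pick up the factor $|\mathscr{S}_N(n,\le d)|^{\frac{d-1}{2d}}$, and bound the cardinality by dropping the constraint $\alpha_j\le N-1$ and counting lattice points of total degree at most $d$. No further comment is needed.
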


\begin{proof}
	For any $f=\sum_{\alpha\in	\mathscr{S}_{K}(n,\le d)}a_{\alpha}z^\alpha$ on $\Omega_K^n$, we have by Bohnenblust--Hille inequalities \eqref{ineq:bh cyclic}  and H\"older's inequality that 
\begin{align*}
	\sum_{\alpha\in	\mathscr{S}_{K}(n,\le d)}|a_\alpha|
	&\le
	\left(\sum_{\alpha\in	\mathscr{S}_{K}(n,\le d)}|a_\alpha|^{\frac{2d}{d+1}}\right)^{\frac{d+1}{2d}}\left(\sum_{\alpha\in	\mathscr{S}_{K}(n,\le d)}1\right)^{\frac{d-1}{2d}}\\
	&\le \BH^{\le d}_{\Om_K}|	\mathscr{S}_{K}(n,\le d)|^{\frac{d-1}{2d}}\left\|f\right\|_{\Omega_K^n},
\end{align*}
where $\BH^{\le d}_{\Omega_K}$ is the best Bohnenblust--Hille constant in \eqref{ineq:bh cyclic}.
Note that 
\begin{equation*}
	|\mathscr{S}_{K}(n,\le d)|
	\le \left|\left\{ \alpha\in \mathbb{Z}_{\ge 0}^{n}:\sum_{j=1}^{n}\alpha_j\le d \right\}\right|
	\le \sum_{0\le k\le d}\binom{n+k}{n},
\end{equation*}
then we finish the proof by Theorem \ref{thm:bh cyclic}.
\end{proof}

\subsection{Bohr's radius}
\label{subsect:bohr radius}

The Bohr's radius for a class $\mathcal{F}$ of functions $f:\Omega_K^n\to \mathbf{C}$ is defined as 
\begin{equation*}
	\rho(\mathcal{F}):=\sup\left\{\rho>0: \sum_{\alpha} |a_{\alpha}| \rho^{|\alpha|}\le \|f\|_{\Omega_K^n}~\textnormal{ for all }~f=\sum_{\alpha}a_{\alpha}z^{\alpha}\in\mathcal{F}\right\}.
\end{equation*}
Consider the following classes of functions 
\begin{equation*}
	\mathcal{F}^{n,K}_{=d}:=\{\textnormal{all $d$-homogeneous polynomials on $\Omega_K^n$}\}.
\end{equation*}

\begin{prop}
	For any $K\ge 3,n\ge 1$ and $d\ge 1$, we have 
	\begin{equation*}
	\rho(\mathcal{F}^{n,K}_{=d})\ge\left[\BH^{\le d}_{\Om_K}\binom{n+d}{n}^{\frac{d-1}{2d}}\right]^{-1/d}.
	\end{equation*}
\end{prop}

\begin{proof}
The proof is again a combination of Bohnenblust--Hille inequalities and H\"older's inequality. In the homogeneous case, we actually have the relation
\begin{equation}
\rho(\mathcal{F}^{n,K}_{=d})=\Sidon(	\mathscr{S}_{K}(n,= d))^{-1/d}
\end{equation}
by definition. Then the desired result follows from the proof of Proposition \ref{prop:sidon}.
\end{proof}
\subsection{Juntas and learning bounded low-degree polynomials}
\label{subsect:juntas}

Fix $n\ge 1$ and $K\ge 3$. For any $k\le n$, we say that a polynomial $f:\Omega_K^n\to \mathbf{C}$ is a \emph{$k$-junta} if it depends on at most $k$ coordinates. 

The next result states that bounded low-degree polynomials are close to some juntas. We denote by $\|f\|_p$ the $L^p$-norm of $f$ with respect to the uniform probability measure on $\Omega_K^n$.

\begin{prop}\label{prop:junta}
	Fix  $K\ge 3,n\ge 1$ and $d\ge 1$. Suppose that $f:\Omega_K^n\to \mathbf{C}$ is of degree at most $d$ and $\|f\|_{\Omega_K^n}\le 1$. Then for any $\epsilon>0$, there exists a $k$-junta $g:\Omega_K^n\to \mathbf{C}$ such that 
	\begin{equation*}
		\|f-g\|_2\le \epsilon\qquad\textnormal{with}\qquad k\le \frac{d\left(\BH^{\le d}_{\Omega_K}\right)^{2d}}{\epsilon^{2d}}.
	\end{equation*}
Moreover, we may choose $k\le \epsilon^{-2d}d C_K^{d^3}$ if $K$ is prime.
\end{prop}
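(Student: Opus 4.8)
The plan is to follow the Eskenazis--Ivanisvili scheme \cite{EI22} for deducing a junta approximation from a Bohnenblust--Hille inequality, now powered by Theorem~\ref{thm:bh cyclic}. Write $f(z)=\sum_{|\alpha|\le d}a_\alpha z^\alpha$ with $\|f\|_{\Omega_N^n}\le 1$. Since the characters $z\mapsto z^\alpha$ are orthonormal for the Haar measure, Parseval gives $\|f\|_2^2=\sum_\alpha|a_\alpha|^2\le 1$, and Theorem~\ref{thm:bh cyclic} gives $\big(\sum_\alpha|a_\alpha|^{2d/(d+1)}\big)^{(d+1)/(2d)}\le\BH^{\le d}_{\Omega_N}$. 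The strategy is simply to discard the monomials with small coefficients: fix a threshold $\delta>0$ to be chosen, set $S:=\{\alpha:|a_\alpha|>\delta\}$, and take $g:=\sum_{\alpha\in S}a_\alpha z^\alpha$, which depends only on the variables occurring in some $\alpha\in S$.

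First I would control the truncation error. On the complement of $S$ one has $|a_\alpha|^2=|a_\alpha|^{2d/(d+1)}|a_\alpha|^{2/(d+1)}\le\delta^{2/(d+1)}|a_\alpha|^{2d/(d+1)}$, so
\[
\|f-g\|_2^2=\sum_{\alpha\notin S}|a_\alpha|^2\le\delta^{2/(d+1)}\sum_{\alpha}|a_\alpha|^{2d/(d+1)}\le\delta^{2/(d+1)}\big(\BH^{\le d}_{\Omega_N}\big)^{2d/(d+1)}.
\]
Choosing $\delta=\epsilon^{d+1}\big(\BH^{\le d}_{\Omega_N}\big)^{-d}$ makes the right-hand side exactly $\epsilon^2$, hence $\|f-g\|_2\le\epsilon$.

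It remains to bound the number of coordinates $g$ depends on. From $|S|\,\delta^{2d/(d+1)}\le\sum_{\alpha\in S}|a_\alpha|^{2d/(d+1)}\le\big(\BH^{\le d}_{\Omega_N}\big)^{2d/(d+1)}$ and the value of $\delta$ we get $|S|\le\big(\BH^{\le d}_{\Omega_N}\big)^{2d}/\epsilon^{2d}$. Every $\alpha\in S$ has $|\alpha|\le d$, hence $|\supp(\alpha)|\le d$, so $g$ depends on at most $k:=\big|\bigcup_{\alpha\in S}\supp(\alpha)\big|\le d\,|S|\le d\big(\BH^{\le d}_{\Omega_N}\big)^{2d}\epsilon^{-2d}$ coordinates; that is, $g$ is a $k$-junta. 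For prime $N$ one substitutes $\BH^{\le d}_{\Omega_N}\le C_N^{d^2}$ from Theorem~\ref{thm:bh cyclic} and absorbs numerical constants into $C_N$ to get $k\le\epsilon^{-2d}d\,C_N^{d^3}$. The heavy lifting is entirely in Theorem~\ref{thm:bh cyclic}, so no genuine obstacle remains; the only place needing a little care is the passage from ``few large Fourier coefficients'' to ``few relevant variables,'' which relies on a monomial of total degree at most $d$ involving at most $d$ variables --- precisely the point at which our convention $|\alpha|=\alpha_1+\cdots+\alpha_n$ (rather than, say, $\max_j\alpha_j$) is convenient.
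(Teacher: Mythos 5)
Your proposal is correct and follows essentially the same route as the paper: threshold the Fourier coefficients at $\lambda=\epsilon^{d+1}\bigl(\BH^{\le d}_{\Omega_N}\bigr)^{-d}$, bound the number of large coefficients by a Markov-type argument using the BH inequality, control the discarded tail via Plancherel, and multiply by $d$ to pass from large coefficients to relevant coordinates. The constants you obtain match those in the paper (your intermediate bound on $|S|$ is in fact slightly sharper than the one displayed there, but leads to the same final estimate).
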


\begin{proof}
	Suppose that $f$ has the Fourier expansion 
	\begin{equation*}
		f(\alpha)=\sum_{|\alpha|\le d}a_{\alpha} z^{\alpha}
	\end{equation*}
where $\alpha\in \{0,1,\dots, K-1\}^{n}$ as in \eqref{eq:fourier expansion cyclic}. For any $\lambda >0$, consider 
	\begin{equation*}
		A_{\lambda}:=\left\{\alpha:|a_{\alpha}|>\lambda \right\}.
	\end{equation*}
Then by Markov's inequality, cyclic BH inequalities \eqref{ineq:bh cyclic} and the assumption that $|f|\le 1$, the cardinality $|A_\lambda|$ of $A_\lambda$ satisfies
\begin{equation*}
	|A_{\lambda}|\le \lambda^{-\frac{2d}{d+1}}\sum_{|\alpha|\le d}|a_{\alpha}|^{\frac{2d}{d+1}}
	\le \lambda^{-\frac{2d}{d+1}}\left(\BH^{\le d}_{\Omega_K}\right)^{\frac{2d}{d+1}}.
\end{equation*}
Take $g_\lambda:=\sum_{\alpha\in A_{\lambda}}a_{\alpha} z^{\alpha}$. 
By Plancherel's identity, we have 
\begin{equation*}
	\|f-g_\lambda\|_2^2=\sum_{\alpha\notin A_{\lambda}}|a_{\alpha}|^2\le \lambda^{\frac{2}{d+1}}\sum_{|\alpha|\le d}|a_{\alpha}|^{\frac{2d}{d+1}}
	\le	 \lambda^{\frac{2}{d+1}}\left(\BH^{\le d}_{\Omega_K}\right)^{\frac{2d}{d+1}}.
\end{equation*}
Choosing $\lambda=\epsilon^{d+1}\left(\BH^{\le d}_{\Omega_K}\right)^{-d}$ so that 
\begin{equation*}
		 \lambda^{\frac{2}{d+1}}\left(\BH^{\le d}_{\Omega_K}\right)^{\frac{2d}{d+1}}
		 =\epsilon^2,
\end{equation*}
we obtain $	\|f-g_\lambda\|_2\le \epsilon$. Moreover, $g_\lambda$ is a $k$-junta with 
\begin{equation*}
	k\le  d \lambda^{-\frac{2d}{d+1}}\left(\BH^{\le d}_{\Omega_K}\right)^{\frac{2d}{d+1}}
	\le \epsilon^{-2d}d\left(\BH^{\le d}_{\Omega_K}\right)^{2d}.
\end{equation*}
This finishes the proof, in view of Theorem \ref{thm:bh cyclic}.
\end{proof}

The following theorem is an extension of the main result in \cite{EI22}. 

\begin{prop}\label{prop:learning}
Fix  $K\ge 3,n\ge 1$ and $d\ge 1$. Suppose that $f:\Omega_K^n\to \mathbf{C}$ is of degree at most $d$ and $\|f\|_{\Omega_K^n}\le 1$. Fix $\epsilon,\delta\in (0,1)$ and 
\begin{equation*}
	M\ge \frac{e^5 d(d+1) \left(\BH^{\le d}_{\Omega_K}\right)^{2d}}{\epsilon^{d+1}}\log\left(\frac{2Kn}{\delta}\right).
\end{equation*}
Then for any $M$ i.i.d. random variables $X_1,\dots X_M$ uniformly distributed on $\Omega_K^n$, as well as the queries 
\begin{equation*}
	\left(X_j, f(X_j)\right),\qquad 1\le j\le M,
\end{equation*}
we can construct a random polynomial $\widetilde{f}:\Omega_K^n\to \mathbf{C}$ such that 
\begin{equation*}
	\|f-\widetilde{f}\|_2^2\le \epsilon
\end{equation*} 
with probability at least $1-\delta$.
\end{prop}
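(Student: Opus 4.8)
The plan is to follow the low-degree learning scheme of Eskenazis--Ivanisvili \cite{EI22}, substituting the cyclic Bohnenblust--Hille inequality \eqref{ineq:bh cyclic} for the Boolean-cube inequality \eqref{ineq:bh boolean} used there. Write $B:=\BH^{\le d}_{\Om_N}$, which is finite by Theorem \ref{thm:bh cyclic}. From the sample $(X_j,f(X_j))_{j=1}^M$ I would form, for every multi-index $\alpha$ with $|\alpha|\le d$, the empirical Fourier coefficient
\[
\widetilde a_\alpha:=\frac1M\sum_{j=1}^M f(X_j)\,\overline{X_j^{\alpha}},
\]
whose expectation equals $\E_{X\sim\Om_N^n}[f(X)\overline{X^\alpha}]=a_\alpha$ by orthogonality of the characters of $\Om_N^n$, and then output the thresholded polynomial $\widetilde f:=\sum_{\alpha}\widetilde a_\alpha z^\alpha$, the sum taken over those $\alpha$ with $|\alpha|\le d$ and $|\widetilde a_\alpha|\ge\lambda$, for a threshold $\lambda=\lambda(\epsilon,d,N)$ to be fixed at the end. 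Note $\deg\widetilde f\le d$ by construction.

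First, I would prove a uniform estimation guarantee: with high probability every $\widetilde a_\alpha$ is close to $a_\alpha$. For fixed $\alpha$ the summands $f(X_j)\overline{X_j^\alpha}$ are i.i.d.\ of modulus at most $\|f\|_{\Om_N^n}\le 1$, so Hoeffding's inequality applied to real and imaginary parts separately gives $\pr\big[\,|\widetilde a_\alpha-a_\alpha|>t\,\big]\le 4\,e^{-Mt^2/4}$. A multi-index with $|\alpha|\le d$ has support of size at most $d$ and nonzero entries in $\{1,\dots,N-1\}$, so there are at most $\sum_{m\le d}\binom nm(N-1)^m\le (Nn)^{d+1}$ of them; a union bound then shows that once $M\ge \tfrac{4}{t^2}\log\!\big(4(Nn)^{d+1}/\delta\big)$, with probability at least $1-\delta$ one has $|\widetilde a_\alpha-a_\alpha|\le t$ for \emph{all} $|\alpha|\le d$ simultaneously. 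Since $\log\!\big(4(Nn)^{d+1}/\delta\big)\le (d+1)\log(2Nn/\delta)$, the logarithmic-in-$n$ dependence claimed in the statement already appears at this step.

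Next, conditioning on this event with $t:=\lambda/2$, I would bound the error using \eqref{ineq:bh cyclic}. By Parseval on $\Om_N^n$, and because $\widehat f$ is supported on $\{|\alpha|\le d\}$ while $\widetilde f$ is supported on $\widetilde A:=\{\alpha:|\alpha|\le d,\ |\widetilde a_\alpha|\ge\lambda\}$,
\[
\|f-\widetilde f\|_2^2=\sum_{\alpha\in\widetilde A}|a_\alpha-\widetilde a_\alpha|^2+\sum_{\alpha\notin\widetilde A}|a_\alpha|^2 .
\]
By \eqref{ineq:bh cyclic}, $\big(\sum_{|\alpha|\le d}|a_\alpha|^{2d/(d+1)}\big)^{(d+1)/(2d)}\le B\|f\|_{\Om_N^n}\le B$. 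On the good event $\widetilde A\subseteq\{|a_\alpha|>\lambda/2\}$, so Markov's inequality gives $|\widetilde A|\le (\lambda/2)^{-2d/(d+1)}B^{2d/(d+1)}$ and the first sum is $\lesssim\lambda^{2/(d+1)}B^{2d/(d+1)}$; off $\widetilde A$ one has $|a_\alpha|\le 3\lambda/2$, so writing $|a_\alpha|^2=|a_\alpha|^{2d/(d+1)}|a_\alpha|^{2/(d+1)}$ bounds the second sum by $(3\lambda/2)^{2/(d+1)}B^{2d/(d+1)}\lesssim\lambda^{2/(d+1)}B^{2d/(d+1)}$. Hence $\|f-\widetilde f\|_2^2\lesssim\lambda^{2/(d+1)}B^{2d/(d+1)}$, which we force below $\epsilon$ by taking $\lambda\asymp\epsilon^{(d+1)/2}B^{-d}$, so that $t^{-2}\asymp\lambda^{-2}\asymp\epsilon^{-(d+1)}B^{2d}$. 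Inserting this into the sample bound above and tracking the universal and polynomial-in-$d$ constants exactly as in \cite{EI22} yields the stated requirement $M\ge e^5 d(d+1)B^{2d}\epsilon^{-(d+1)}\log(2Nn/\delta)$, after which $\|f-\widetilde f\|_2^2\le\epsilon$ holds with probability at least $1-\delta$ (in particular $\widetilde f$ is a $k$-junta with $k\le d|\widetilde A|$, recovering the quantitative form of Proposition \ref{prop:junta}).

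Finally, the main point: there is no serious obstacle left once Theorem \ref{thm:bh cyclic} is in hand --- it is precisely the dimension-free cyclic BH inequality that makes a \emph{logarithmic}-in-$n$ sample size possible, by bounding $\|\widehat f\|_{2d/(d+1)}$ by $B$ uniformly in $n$. The only places that need mild care are (i) the complex-valued concentration step, handled by splitting into real and imaginary parts, and (ii) the elementary count of degree-$\le d$ monomials on $\Om_N^n$ that drives the union bound and pins down the final $\log(2Nn/\delta)$ factor; the remaining $\epsilon$-, $d$-, and $B$-dependent book-keeping is identical to \cite{EI22}.
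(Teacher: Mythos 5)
Your proposal is correct and follows essentially the same route as the paper's proof: empirical Fourier coefficients, concentration plus a union bound over the at most $(Nn)^{d+1}$ monomials of degree $\le d$, thresholding, and the cyclic BH inequality \eqref{ineq:bh cyclic} to control both the retained coefficients (via a Markov/counting bound on the threshold set) and the discarded ones. The only differences are cosmetic — you fix the threshold ratio at $t=\lambda/2$ where the paper takes $a=b(1+\sqrt{d+1})$ to land exactly on the constant $e^5d(d+1)$, and both arguments defer to \cite{EI22} for that final bookkeeping.
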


\begin{proof}
	Fix $b>0$. Suppose that  $f$ has the Fourier expansion 
	\begin{equation*}
		f(\alpha)=\sum_{|\alpha|\le d}a_{\alpha} z^{\alpha}
	\end{equation*}
	as in \eqref{eq:fourier expansion cyclic}. For any $|\alpha|\le d$, consider 
	\begin{equation*}
		\widetilde{a}_\alpha:=\frac{1}{M_b}\sum_{j=1}^{M_b}f(X_j)\overline{X_j}^\alpha,
	\end{equation*}
with $M_b>0$ to be chosen later. 
Recall that for $z=(z_1,\dots, z_n)\in \Omega_K^n$, $\overline{z}^{\alpha}=\overline{z}_1^{\alpha_1}\cdots \overline{z}_n^{\alpha_n}$. Since $\widetilde{a}_{\alpha}$ is the sum of bounded (noting that $|f(X_j)\overline{X_j}^{\alpha}|\le 1$) i.i.d. random variables with $\E [\widetilde{a}_{\alpha}]=a_{\alpha}$, we have
by Chernoff bound that 
\begin{equation*}
	\Pr\{|\widetilde{a}_\alpha-a_{\alpha}|>b\}\le 2e^{-M_b b^2/2},\qquad |\alpha|\le d.
\end{equation*}
Then the union bound gives 
\begin{align*}
		\Pr\{|\widetilde{a}_\alpha-a_{\alpha}|\le b, |\alpha|\le d\}
		&\ge 1-2e^{-M_b b^2/2}|\{\alpha: |\alpha|\le d\}|\\
		&\ge 1-2e^{-M_b b^2/2}\sum_{0\le k\le d}\binom{n+k}{n}\\
		&\ge 1-\delta
\end{align*}
where the last inequality holds if we choose 
\begin{equation}\label{eq:Mb}
	M_b=\left\lceil \frac{2}{b^2}\log \left(\frac{2}{\delta}\sum_{0\le k\le d}\binom{n+k}{n}\right)\right\rceil.
\end{equation}
Fix $a\in (b,\infty)$ and consider 
\begin{equation*}
	\mathcal{S}_a:=\{\alpha:|\widetilde{a}_\alpha|>a\}.
\end{equation*}
Then with probability at least $1-\delta$, we have 
\begin{equation}\label{ineq:cases}
	\begin{cases*}
		|a_\alpha|	\le |\widetilde{a}_\alpha|+|a_\alpha-\widetilde{a}_\alpha|\le a+b&$\alpha\notin \mathcal{S}_a$\\
		|a_\alpha|			\ge |\widetilde{a}_\alpha|-|a_\alpha-\widetilde{a}_\alpha|\ge a-b&$\alpha\in \mathcal{S}_a$
	\end{cases*}.
\end{equation}
Consider the random function $h_{a,b}:\Om_K^n\to \mathbf{C}$ given by
\begin{equation*}
	h_{a,b}(z)=\sum_{\alpha\in \mathcal{S}_a}\widetilde{a}_{\alpha} z^{\alpha}.
\end{equation*}
Similar to the estimates in the proof of Proposition \ref{prop:junta}, we have with probability at least $1-\delta$
\begin{equation*}
	|\mathcal{S}_a|\le (a-b)^{-\frac{2d}{d+1}}\sum_{\alpha\in\mathcal{S}_a}|a_\alpha|^{\frac{2d}{d+1}}\le (a-b)^{-\frac{2d}{d+1}}\left(\BH^{\le d}_{\Omega_K}\right)^{\frac{2d}{d+1}}
\end{equation*}
 by \eqref{ineq:cases}.
Then with probability at least $1-\delta$, one has 
\begin{align*}
	\|f-h_{a,b}\|^2_2
	=&\sum_{\alpha\in\mathcal{S}_a}|a_\alpha-\widetilde{a}_\alpha|^2+\sum_{\alpha\notin\mathcal{S}_a}|a_\alpha|^2\\
	\le& |\mathcal{S}_a|b^2+(a+b)^{\frac{2}{d+1}}\sum_{\alpha\notin\mathcal{S}_a}|a_\alpha|^{\frac{2d}{d+1}}\\
	\le &\left(\BH^{\le d}_{\Omega_K}\right)^{\frac{2d}{d+1}}\left((a-b)^{-\frac{2d}{d+1}}b^2+(a+b)^{\frac{2}{d+1}}\right).
\end{align*}
Choosing $a=b(1+\sqrt{d+1})$ and arguing as in \cite{EI22}, we have 
\begin{equation*}
		\|f-h_{a,b}\|^2_2\le \epsilon, \qquad\textnormal{whenever}\qquad b^2\le e^{-5}d^{-1}\epsilon^{d+1}\left(\BH^{\le d}_{\Omega_K}\right)^{-2d}.
\end{equation*}
Plugging this in \eqref{eq:Mb}, we may choose 
\begin{equation*}
	M_b= \left\lceil \frac{e^5 d \left(\BH^{\le d}_{\Omega_K}\right)^{2d}}{\epsilon^{d+1}}\log\left(\frac{2}{\delta}\sum_{0\le k\le d}\binom{n+k}{n}\right)\right\rceil
\end{equation*}
 so that $	\|f-h_{a,b}\|^2_2\le \epsilon$ with probability at least $1-\delta$. Using the estimate (recalling that $d\le (K-1)n$)
 \begin{equation*}
 	\sum_{0\le k\le d}\binom{n+k}{n}
 	\le 	\sum_{0\le k\le d}(n+k)^k
 	\le 	\sum_{0\le k\le d}(Kn)^k
 	\le (Kn)^{d+1}
 \end{equation*} 
we finish the proof with the desired $M=M_b$.
\end{proof}
\subsection{Noncommutative Bohnenblust--Hille inequality for Heisenberg--Weyl matrix basis}
\label{HWbasis}


Fix $K> 2$ and $\om=e^{\frac{2\pi i}{K}}$. Let $\mathbb{Z}_K=\{0,1,\dots, K-1\}$ be the additive cyclic group of order $K$ and 
$$\{e_j:j\in \mathbb{Z}_K\}=\{e_j:0\le j\le K-1\}$$ 
be the standard basis of $\C^K$. The  ``shift" operator $X$ and ``phase" operator $Z$ are defined via 
\begin{equation*}
	X e_j= e_{j+1},\qquad Ze_j = \om^j e_j,\qquad \textnormal{for all} \qquad j\in \mathbb{Z}_K.
\end{equation*}
Note that $X^K=Z^K=\1$. See more in \cite{AEHK}. In the following, everything is $\!\!\mod\, K$ unless otherwise stated.

Below we consider Heisenberg--Weyl collection of matrices of size $K\times K$:
$$
\textnormal{HW}(K):=\{X^\ell Z^m\}_{\ell,m\in \mathbb{Z}_K}\,.
$$
These are unitary matrices and form a basis of the $K\times K$ complex matrix algebra $M_K(\C)$. Moreover, they are orthonormal with respect to
the normalized trace $\tr_K=K^{-1}\tr$.

Fix $n\ge 1$. Any HW observable $A\in M_K(\C)^{\otimes n}$ has a unique Fourier expansion with respect to $\textnormal{HW}(K)$:
\begin{equation*}
	A=\sum_{\vec{\ell},\vec{m}\in \mathbb{Z}_K^n}\widehat{A}(\vec{\ell},\vec{m})X^{\ell_1}Z^{m_1}\otimes \cdots \otimes X^{\ell_n}Z^{m_n},
\end{equation*}
where $\widehat{A}(\vec{\ell},\vec{m})\in\C$ is the Fourier coefficient at $(\vec{\ell},\vec{m})$. 
We say that $A$ is \emph{of degree-$d$} if $\widehat{A}(\vec{\ell},\vec{m})=0$ whenever 
\begin{equation*}
	|(\vec{\ell},\vec{m})|:=\sum_{j=1}^{n}(\ell_j+m_j)>d.
\end{equation*}
Here, $0\le \ell_j,m_j\le K-1$ and we do not $\!\!\mod K$ freely. 


We denote by $\widehat{A}$ the sequence of Fourier coefficients of $A$, and write 
\begin{equation*}
	\|\widehat{A}\|_p:=\left(\sum_{\vec{\ell},\vec{m}\in \mathbb{Z}_K^n}|\widehat{A}(\vec{\ell},\vec{m})|^p\right)^{1/p},\qquad 1\le p<\infty.
\end{equation*}

Our main result for the Heisenberg--Weyl basis is the following quantum analog of Bohnenblust--Hille inequality:

\begin{theorem}
\label{thm:bh HW}
	Fix a prime number $K> 2$ and suppose $d\ge 1$. 
	Then the Bohnenblust--Hille inequalities hold for the Heisenberg--Weyl basis:
	for any $n\ge 1$ and any $A\in M_K(\C)^{\otimes n}$ of degree-$d$, we have 
	\begin{equation*}
		\|\widehat{A}\|_{\frac{2d}{d+1}}\le C(d,K)\|A\|,
	\end{equation*}
	with $C(d,K)\le (K+1)^d \textnormal{BH}_{\Om_K}^{\le d}$. In particular, we may choose $C(d,K)=C_K^{d^2}$ for some constant $C_K>0$ depending only on $K$.
\end{theorem}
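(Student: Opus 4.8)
The plan is to derive Theorem~\ref{thm:bh HW} from the classical inequality Theorem~\ref{thm:bh cyclic} via the reduction of \cite{SVZ}, exploiting that for an odd prime $N$ the Heisenberg--Weyl group has an especially rigid eigenstructure. I would first record the relevant spectral facts (see \cite{SVZ}): for $N>2$ prime, every $X^\ell Z^m$ with $(\ell,m)\in\mathbb Z_N^2\setminus\{(0,0)\}$ satisfies $(X^\ell Z^m)^N=\1$ and has simple spectrum $\{1,\om,\dots,\om^{N-1}\}$; moreover the $N+1$ lines through the origin in $\mathbb F_N^2$ index $N+1$ maximal abelian subalgebras of $M_N(\C)$, with orthonormal eigenbases $\mathcal B_c=\{|c,k\rangle:k\in\mathbb Z_N\}$, $c\in\mathbb P^1(\mathbb F_N)$, chosen so that $X^\ell Z^m$ is diagonal in $\mathcal B_c$ precisely when $(\ell,m)$ lies on the line $c$, and has vanishing diagonal in $\mathcal B_c$ otherwise. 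When $(\ell,m)$ lies on $c$, its eigenvalue on $|c,k\rangle$ has the form $\om^{\phi_c(\ell,m)}\om^{tk}$, where $t$ is the coordinate of $(\ell,m)$ along a fixed generator of $c$; the map $(\ell,m)\mapsto t\bmod N$ is a bijection from $c$ onto $\mathbb Z_N$, and $t\bmod N\le \ell+m$.

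Next I would manufacture, for each $\vec c=(c_1,\dots,c_n)\in\mathbb P^1(\mathbb F_N)^n$, a classical polynomial $g_{\vec c}$ on $\Om_N^n$ out of a product eigenstate. For $\vec k\in\mathbb Z_N^n$ put $|\psi_{\vec c,\vec k}\rangle=\bigotimes_{j=1}^n|c_j,k_j\rangle$ and define $g_{\vec c}(z):=\langle\psi_{\vec c,\vec k}|A|\psi_{\vec c,\vec k}\rangle$ for $z=(\om^{k_1},\dots,\om^{k_n})\in\Om_N^n$. By the eigenstructure above, only the Fourier modes $(\vec\ell,\vec m)$ of $A$ with $(\ell_j,m_j)$ on the line $c_j$ for every $j$ survive, each contributing (up to a unimodular constant) a single monomial $z^\beta$ with $|\beta|=\sum_j (t_j\bmod N)\le\sum_j(\ell_j+m_j)=|(\vec\ell,\vec m)|\le d$, and these monomials are pairwise distinct (once $\vec c$ is fixed, the monomial determines $(\vec\ell,\vec m)$). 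Hence $g_{\vec c}$ is a degree-$\le d$ polynomial on $\Om_N^n$ whose Fourier coefficients are unimodular multiples of $\{\widehat A(\vec\ell,\vec m):(\ell_j,m_j)\in c_j\text{ for all }j\}$, and $\|g_{\vec c}\|_{\Om_N^n}\le\|A\|$. Applying Theorem~\ref{thm:bh cyclic} to $g_{\vec c}$ gives
\[\sum_{(\ell_j,m_j)\in c_j\ \forall j}\big|\widehat A(\vec\ell,\vec m)\big|^{\frac{2d}{d+1}}\;\le\;\Big(\textnormal{BH}_{\Om_N}^{\le d}\,\|A\|\Big)^{\frac{2d}{d+1}}.\]

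To conclude I would sum this inequality over all $\vec c\in\mathbb P^1(\mathbb F_N)^n$ and count multiplicities: a mode $(\vec\ell,\vec m)$ of $A$ is counted once for each $\vec c$ with $(\ell_j,m_j)\in c_j$ for all $j$, i.e.\ $(N+1)^{\,n-|\supp(\vec\ell,\vec m)|}$ times, where $\supp(\vec\ell,\vec m)$ is the set of nontrivial tensor slots, of size $\le d$ since each contributes at least $1$ to $|(\vec\ell,\vec m)|$. Therefore
\[(N+1)^{-d}\,\|\widehat A\|_{\frac{2d}{d+1}}^{\frac{2d}{d+1}}\;\le\;\sum_{(\vec\ell,\vec m)}(N+1)^{-|\supp(\vec\ell,\vec m)|}\big|\widehat A(\vec\ell,\vec m)\big|^{\frac{2d}{d+1}}\;=\;(N+1)^{-n}\sum_{\vec c}\,\sum_{(\ell_j,m_j)\in c_j\ \forall j}\big|\widehat A(\vec\ell,\vec m)\big|^{\frac{2d}{d+1}}\;\le\;\Big(\textnormal{BH}_{\Om_N}^{\le d}\,\|A\|\Big)^{\frac{2d}{d+1}},\]
so $\|\widehat A\|_{\frac{2d}{d+1}}\le (N+1)^{(d+1)/2}\,\textnormal{BH}_{\Om_N}^{\le d}\,\|A\|\le (N+1)^{d}\,\textnormal{BH}_{\Om_N}^{\le d}\,\|A\|$ for $d\ge1$; the refined bound $C(d,N)=C_N^{d^2}$ then follows from the prime-$N$ estimate $\textnormal{BH}_{\Om_N}^{\le d}\le C_N^{d^2}$ of Theorem~\ref{thm:bh cyclic} together with $(N+1)^{d}\le C_N^{d^2}$.

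The point I expect to require the most care is that the union bound over the $(N+1)^n$ choices of $\vec c$ must \emph{not} destroy dimension-freeness: the factor $(N+1)^n$ on the right is exactly cancelled by the multiplicity $(N+1)^{\,n-|\supp|}$ on the left, leaving only the harmless $(N+1)^{|\supp|}\le(N+1)^d$. The remaining technical inputs — the full spectrum $\{1,\om,\dots,\om^{N-1}\}$ and the vanishing of the diagonal of $X^\ell Z^m$ in the ``wrong'' bases $\mathcal B_c$ (where primality of $N$ enters, through the line structure of $\mathbb F_N^2$), and the check that the $\Om_N^n$-degree of each surviving monomial is controlled by the Heisenberg--Weyl degree $\sum_j(\ell_j+m_j)$ despite the latter using representatives in $\{0,\dots,N-1\}$ rather than reductions mod $N$ — are exactly the facts quoted in the first paragraph and are established in \cite{SVZ}.
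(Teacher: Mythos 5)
Your proof is correct and is essentially the paper's own argument: the paper simply invokes the reduction from \cite{SVZ} (the decomposition of the Heisenberg--Weyl group into the $N+1$ lines of $\mathbb{F}_N^2$ and the associated mutually unbiased eigenbases, valid because $N$ is prime) together with Theorem \ref{thm:bh cyclic}, and you have reconstructed that reduction in full, even arriving at the slightly sharper exponent $(N+1)^{(d+1)/2}$. The one point worth making explicit is that the inequality $t\bmod N\le \ell+m$ holds only after normalizing each line's generator (e.g.\ to $(1,m_0)$ or $(0,1)$, so that $t$ equals the first nonzero coordinate of $(\ell,m)$), which is exactly the convention used in \cite{SVZ}.
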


As the statement suggests, we may actually reduce the problem to the Bohnenblust--Hille inequality for cyclic groups $\Om_K^{n}, n\ge 1$, via the reduction proved in an earlier version of \cite{SVZ}. In this reduction step, we need $K$ to be prime at the time of writing. Then Theorem \ref{thm:bh HW} follows immediately from the reduction and Theorem \ref{thm:bh cyclic}. Here we assume the prime $K>2$. The $K=2$ case is contained in \cite{VZ22}. Namely the Bohnenblust--Hille inequalities in the quantum case for $K=2$ (Pauli matrices as basis) can be reduced to the classical case for $K=2$ (Boolean cube case \eqref{ineq:bh boolean})  up to an additional factor of $(K+1)^d=3^d$. In this case, the classical Bohnenblust--Hille constant has a better bound $\BH^{\le d}_{\Omega_2}\le C^{\sqrt{d\log d}}$, so the quantum constant has a better estimate of $3^dC^{\sqrt{d\log d}}\le C^d$ for some universal $C>0$.

	\subsection{Learning bounded quantum observables of low-degrees}
	\label{learn}
	
	The  reader may consult an earlier version of \cite{SVZ} to learn how quantum  Bohnenblust--Hille inequalities in Theorem \ref{thm:bh HW} can be used in learning bounded quantum observables $A\in M_K(\mathbf{C})^{\otimes n}$ that are of low degree in the Heisenberg--Weyl basis (we also presented the statements for another basis named after Gell-Mann using the corresponding Bohnenblust--Hille type inequalities).
	
	Also, one may use noncommutative Bohnenblust--Hille inequalities to obtain other applications, such as junta-type results aforementioned. See \cite{VZ22,SVZ} for details. 

	\appendix
\section{Retracing the proofs of Bohnenblust--Hille inequality}
\label{sect:appendix}
%

	In this appendix we examine the difficulties of adapting the proof of the Bohnenblust--Hille inequalities for $\T^n$ and $\Omega_2^n$ in \cite{BPS,DMP} to $\Omega_K^n$ for $K>2$.
	Ultimately this motivates Conjecture \ref{conjectuer}, which in turn receives a partial answer in Appendix \ref{sect:partial}.

	We will see the main obstacle to adapting the standard BH argument lies in the \emph{polarization step}. Since we only aim to illustrate the barrier in the proof scheme here, we will restrict ourselves to a subfamily of degree-$d$ polynomials as they capture the difficulty.
	
	The class of polynomials under consideration in this section is the family of homogeneous polynomials of degree at most $d$ with individual degree at most $K/2.$ That is, we fix $K>2$ and $f:\Omega_K^n\to \mathbf{C}$ such that 
	\begin{equation*}
		f(z)=\sum_{|\alpha|=d}a_{\alpha}z^{\alpha}
	\end{equation*}
	where in the summation $\alpha=(\alpha_1,\dots, \alpha_n)\in \{0,1,\dots, [\frac{K}{2}]\}^n$. To each such $\alpha$, we associate a unique $\bi=(i_1,\dots i_d)$ with $1\le i_1\le\cdots \le i_d\le n$ that is defined by
	\begin{equation*}
		\bi=(1,\stackrel{\alpha_1}{\dots}, 1,2,\stackrel{\alpha_2}{\dots}, 2, \dots, n,\stackrel{\alpha_n}{\dots}, n),
	\end{equation*}
	or equivalently, 
	\begin{equation*}
		\alpha_j:=|\{k:i_k=j\}|.
	\end{equation*}
	Put for fixed $K$ and $n$ that 
	\begin{equation*}
		J(d):=\left\{\bi=(i_1,\dots, i_d):1\le i_1\le \dots\le i_d\le n, \max_k|\{k:i_k=j\}|\le [K/2]\right\}.
	\end{equation*}
	So our function $f:\Omega_K^n\to \mathbf{C}$ in consideration can be represented as 
	\begin{equation}\label{eq:fourier_exp_d_homo}
		f(z)=\sum_{\bi\in J(d)}a_{\bi}z_{\bi}
		=\sum_{\bi\in J(d)}a_{i_1,\dots, i_d}z_{i_1}\cdots z_{i_d},
	\end{equation}
	with $z_{\bi}:=z_{i_1}\cdots z_{i_d}$, 
	and we aim to show
	\begin{equation*}
		\left(\sum_{\bi \in J(d)}|a_{\bi}|^{\frac{2d}{d+1}}\right)^{\frac{d+1}{2d}}
		\le B_K(d)\left\|\sum_{\bi\in J(d)}a_{\bi}z_{\bi}\right\|_{\Omega_K^n}
	\end{equation*}
	with $B_K(d)>0$ being the best constant in this restricted case. 
	
	We retrace the proof of \cite{BPS,DMP} in several steps and will need more notation. 
	For $\bi,\bj\in [n]^d$, the notation $\bi\sim \bj$ means that there is a permutation $\sigma$ of $[d]$ such that $i_{\sigma(k)}=j_{k},k\in [d]$. For any $\bi$, we denote by $[\bi]$ the equivalent class of all $\bj\in [n]^d$ such that $\bj\sim\bi$ and $\mult(\bi)$  the cardinality of $[\bi]$.
	Then 
	\begin{align*}
	\sum_{\bi\in J(d)}|a_{\bi}|^{\frac{2d}{d+1}}
	&=	\sum_{\bi\in [n]^d} \mult(\bi)^{-\frac{1}{d+1}} \left(\frac{|a_{[\bi]}|}{\mult(\bi)^{1/2}}\right)^{\frac{2d}{d+1}}\\
	&\le \sum_{\bi\in [n]^d} \left(\frac{|a_{[\bi]}|}{\mult(\bi)^{1/2}}\right)^{\frac{2d}{d+1}}
	\end{align*}
which is our initial step, \textbf{Step 0}.

	\medskip
	\noindent\textbf{Step 1.} By the so-called \emph{Blei's inequality} \cite{BPS,DMP}, we have for any $1\le k\le d$,
	\begin{equation*}
	 \sum_{\bi\in [n]^d} \left(\frac{|a_{[\bi]}|}{\mult(\bi)^{1/2}}\right)^{\frac{2d}{d+1}}
		\le \prod_{S\subset [d]:|S|=k}\left[\sum_{\bi_S}\left(\sum_{\bi_{\bar{S}}}\frac{|a_{[\bi]}|^{2}}{\mult(\bi)}\right)^{\frac{1}{2}\cdot \frac{2k}{k+1}}\right]^{\frac{k+1}{2k}\cdot \binom{d}{k}^{-1}}
	\end{equation*}
	where for each $S\subset [d]$, we used the convention that $\bar{S}=[d]\setminus S$ and 
	$$\sum_{\bi_S}=\sum_{i_j:j\in S}.$$
	The above inequality is a consequence of interpolation (or H\"older's inequality) and Minkowski's inequality.
	
	\medskip
	\noindent\textbf{Step 2.} For each fixed $S\subset [d]$ with $|S|=k$, note that 
	\begin{equation*}
		\frac{\mult(\bi)}{\mult(\bi_{\bar{S}})}\le \mult(\bi)\le d!.
	\end{equation*}
So 
\begin{equation*}
	\frac{|a_{[\bi]}|^{2}}{\mult(\bi)}\le d!\frac{|a_{[\bi]}|^{2}}{\mult(\bi)^2}\mult(\bi_{\bar{S}})
\end{equation*}
and thus we have shown that 
\begin{flalign*}
			\mathrlap{\left(\sum_{\bi \in J(d)}|a_{\bi}|^{\frac{2d}{d+1}}\right)^{\frac{d+1}{2d}}} &\\
			&&{} \le \sqrt{d!}
			\prod_{S\subset [d]:|S|=k}\left[\sum_{\bi_S}\left(\sum_{\bi_{\bar{S}}}\frac{|a_{[\bi]}|^{2}}{\mult(\bi)^2}\mult(\bi_{\bar{S}})\right)^{\frac{1}{2}\cdot \frac{2k}{k+1}}\right]^{\frac{k+1}{2k}\cdot \binom{d}{k}^{-1}}.
\end{flalign*}
For each $\bi$ with $S$ fixed, consider the polynomial
\begin{equation*}
	P_{\bi_S}(x) :=\sum_{\bi_{\bar{S}}\in [n]^{d-k}}\frac{a_{[\bi]}}{\mult(\bi)}x_{\bi_{\bar{S}}},\qquad x=(x_1,\dots, x_n)\in 	\Omega_K^n,
\end{equation*}
whose Fourier expansion is given by 
\begin{equation*}
		P_{\bi_S}(x) =\sum_{\bi_{\bar{S}}\in J(d-k)}\frac{a_{[\bi]}}{\mult(\bi)}\mult(\bi_{\bar{S}})x_{\bi_{\bar{S}}}.
\end{equation*}
	 Thus Plancherel's identity gives 
	\begin{align*}
	\E_{x\sim \Omega_K^n}|P_{\bi_S}(x)|^2
	=&\E_{x\sim \Omega_K^n} \left|\sum_{\bi_{\bar{S}}\in [n]^{d-k}}\frac{a_{[\bi]}}{\mult(\bi)}x_{\bi_{\bar{S}}}\right|^2\\
	=	&\sum_{\bi_{\bar{S}}\in J(d-k)}\frac{|a_{[\bi]}|^2}{\mult(\bi)^2}\mult(\bi_{\bar{S}})^2\\
	=&\sum_{\bi_{\bar{S}}\in [n]^{d-k}}\frac{|a_{[\bi]}|^2}{\mult(\bi)^2}\mult(\bi_{\bar{S}})
	\end{align*}
	where $\E_{x\sim \Omega_K^n}$ is with respect to the Haar measure on $\Omega_K^n$. So we just proved
 
	\begin{flalign*}
		\mathrlap{\left(\sum_{\bi \in J(d)}|a_{\bi}|^{\frac{2d}{d+1}}\right)^{\frac{d+1}{2d}}} &\\
		&&{} \le \sqrt{d!}
		\prod_{S\subset [d]:|S|=k}\left[\sum_{\bi_S}\left(
		\E_{x\sim \Omega_K^n}|P_{\bi_S}(x)|^2 \right)^{\frac{1}{2}\cdot \frac{2k}{k+1}}\right]^{\frac{k+1}{2k}\cdot \binom{d}{k}^{-1}}.
	\end{flalign*}

	\medskip
	\noindent\textbf{Step 3.} Now we are in a position to apply the following moment comparison estimates: for polynomials $g:\Omega_K^n\to \mathbf{C}$ of degree at most $d$ we have 
	\begin{equation}\label{ineq:moment comparison}
		\left(\E_{z\sim \Omega_K^n}|g(z)|^2\right)^{1/2}\le \rho_p^{Cd}\left(\E_{z\sim \Omega_K^n}|g(z)|^p\right)^{1/p},\qquad 1\le p<2,
	\end{equation}
	where $\rho_p=(p-1)^{-1/2}$ for $1<p<2$ and $\rho_1=e$, together with $C>0$ being some universal constant. In fact, it is a standard argument to derive this moment comparison estimate from the (not necessarily sharp) hypercontractivity of the semigroup that takes $z^{\alpha}$ to $e^{-t|\alpha|}z^{\alpha}$. Such hypercontractivity estimates have been known for cyclic $\Omega_K^n,K>2$ (see e.g. \cite{A,JPPP}).
    We remark that in these references the definition of $|\cdot|$ is not our choice. The $L_p$-$L_2$ hypercontractivity estimates in their choice of $|\cdot|$ are stronger. 
    
    
	
	Thus, we continue with our estimates and arrive at 
	\begin{equation*}
		\sum_{\bi_S}\left(
		\E_{x\sim \Omega_K^n}|P_{\bi_S}(x)|^2 \right)^{\frac{1}{2}\cdot \frac{2k}{k+1}}
		\le \rho_{2k/(k+1)}^{2C(d-k)k/(k+1)}\E_{x\sim \Omega_K^n} \sum_{\bi_S}\left|P_{\bi_S}(x)\right|^{\frac{2k}{k+1}}.
	\end{equation*}
	So we obtain 
		\begin{align*}
		\left(\sum_{\bi \in J(d)}|a_{\bi}|^{\frac{2d}{d+1}}\right)^{\frac{d+1}{2d}}
		\le C(d,k)
		\prod_{S\subset [d]:|S|=k}\left[
		\E_{x\sim \Omega_K^n} \sum_{\bi_S}\left|P_{\bi_S}(x)\right|^{\frac{2k}{k+1}}
		\right]^{\frac{k+1}{2k}\cdot \binom{d}{k}^{-1}}.
	\end{align*}
	
	\medskip
	\noindent\textbf{Step 4.} Now for fixed $x\in \Omega_K^n$ we apply the Bohnenblust--Hille inequalities for $k$:
	\begin{equation*}
		\sum_{\bi_S}\left|P_{\bi_S}(x)\right|^{\frac{2k}{k+1}}
		\le B_K(k)^{\frac{2k}{k+1}}\sup_{y\in \Omega_K^{n}}\left|\sum_{\bi_S}\sum_{\bi_{\bar{S}}}\frac{a_{[\bi]}}{\mult(\bi)}x_{\bi_{\bar{S}}}y_{\bi_{S}}\right|^{\frac{2k}{k+1}}.
	\end{equation*}
	So for all $S\subset [d]$ with $|S|=k$, one has
	\begin{equation*}
\left[
\E_{x\sim \Omega_K^n} \sum_{\bi_S}\left|P_{\bi_S}(x)\right|^{\frac{2k}{k+1}}
\right]^{\frac{k+1}{2k}}
		\le B_K(k)\sup_{x\in \Omega_K^n, y\in \Omega_K^{n}}\left|\sum_{\bi_S}\sum_{\bi_{\bar{S}}}\frac{a_{[\bi]}}{\mult(\bi)}x_{\bi_{\bar{S}}}y_{\bi_{S}}\right|.
	\end{equation*}

	\medskip
	\noindent\textbf{Step 5.}
	All combined, we have shown that  
	\begin{equation*}
		\left(\sum_{\bi\in J(d)}|a_{\bi}|^{\frac{2d}{d+1}}\right)^{\frac{d+1}{2d}}
		\le C(d,k)B_K(k)\sup_{|S|=k}\sup_{x\in \Omega_K^n, y\in \Omega_K^{n}}\left|\sum_{\bi_S}\sum_{\bi_{\bar{S}}}\frac{a_{[\bi]}}{\mult(\bi)}x_{\bi_{\bar{S}}}y_{\bi_{S}}\right|.
	\end{equation*}
	We are ready to discuss the main obstacle of polarization. 
	For this let us recall the polarization result in \cite{DMP} adapted to our setting. Consider any homogeneous polynomial on $\mathbf{C}^n$ of degree $d$
	$$P(z)=\sum_{\bi\in J(d)}b_{\bi}z_{\bi}=\sum_{1\le i_1\le \dots\le i_d\le n}b_{i_1,\dots, i_d}z_{i_1}\cdots z_{i_d},$$
	and define $L_P:(\mathbf{T}^n)^d\to \mathbf{C}$
	$$L_P(z^{(1)},\dots, z^{(d)})=
	\sum_{\bi\in [d]^n}\frac{b_{[\bi]}}{\mult(\bi)}z^{(1)}_{i_1}\cdots z^{(d)}_{i_d}.$$
	Then $L_P$ is $d$-affine, i.e., $L_P$ is affine in each $z^{(j)}, j\in [d]$. It is also symmetric; \emph{i.e.}, $L_P$ is invariant under the permutation of coordinates $z^{(1)},\dots, z^{(d)}$. Moreover, 
	on the diagonal one has
	\begin{equation*}
		L_P(z,\dots, z)=\sum_{\bi\in [d]^n}\frac{b_{[\bi]}}{\mult(\bi)}z_{i_1}\cdots z_{i_d}=P(z).
	\end{equation*}
	Thus for any $z,w\in \mathbf{T}^n$ and $t\in [0,1]$
	\begin{align*}
		p_{z,w}(t):=&P\left(\frac{1+t}{2}z+\frac{1-t}{2}w\right)\\
		=&\sum_{k=0}^{d}\binom{d}{k}\left(\frac{1+t}{2}\right)^{k}\left(\frac{1-t}{2}\right)^{d-k}L_P(\underbrace{z,\dots,z}_{k},\underbrace{w,\dots, w}_{d-k}).
	\end{align*}
	By  \cite[Proposition 4]{DMP}, we have 
	\begin{align*}
		\left|L_P(\underbrace{z,\dots,z}_{k},\underbrace{w,\dots, w}_{d-k})\right|
		&\le C(d,k)\sup_{t\in [0,1]}|p_{z,w}(t)|\\
		&=C(d,k) \sup_{t\in [0,1]}\left|P\left(\frac{1+t}{2}z+\frac{1-t}{2}w\right)\right|.
	\end{align*}
	Note that for any $z,w\in \Omega_K$ and for any $t\in [0,1]$, $\frac{1+t}{2}z+\frac{1-t}{2}w$ belongs to the convex hull of $\Omega_K$, $\conv(\Omega_K)$. So
	\begin{flalign*}
		\mathrlap{\sup_{z,w\in \Omega_K^n}\left|L_P(\underbrace{z,\dots,z}_{k},\underbrace{w,\dots, w}_{d-k})\right|}\\
		& &&\le C(d,k)\sup_{z,w\in \Omega_K^n} \sup_{t\in [0,1]}\left|P\left(\frac{1+t}{2}z+\frac{1-t}{2}w\right)\right|\\
		& &&\le C(d,k)\sup_{z\in \conv(\Omega_K)^n}\left|P\left(z\right)\right|.
	\end{flalign*}
	Now we extend our polynomial $f:\Omega_K^n\to \mathbf{C}$ to $\mathbf{C}^n$, still denoted by $f$, via the same Fourier expansion:
	\begin{equation*}
		f(z)=\sum_{\bi\in J(d)}a_{\bi}z_{\bi},\qquad z\in \mathbf{C}^n.
	\end{equation*}
	Apply the above polarization result to $P=f$ to obtain
	\begin{equation*}
		\sup_{|S|=k}\sup_{x\in \Omega_K^n, y\in \Omega_K^{n}}\left|\sum_{\bi_S}\sum_{\bi_{\bar{S}}}\frac{a_{[\bi]}}{\mult(\bi)}x_{\bi_{\bar{S}}}y_{\bi_{S}}\right|
		\le C(d,k)\sup_{z\in \conv(\Omega_K)^n}\left|f\left(z\right)\right|.
	\end{equation*}
	
	\medskip
	
	These are essentially the proof ingredients of Bohnenblust--Hille inequalities for $\T^n=\Omega_\infty^n$ and $\Omega_2^n$ and  we conclude with
	\begin{equation}
	\label{ineq:convex}
		\left(\sum_{\bi\in J(d)}|a_{\bi}|^{\frac{2d}{d+1}}\right)^{\frac{d+1}{2d}}
		\le C(d,k)B_K(k)\sup_{z\in \conv(\Omega_K)^n}\left|f\left(z\right)\right|.
	\end{equation}
	If we could prove  \eqref{ineq:mmp} which we quote here
	\begin{equation}\label{ineq-appendix:mmp}
		\sup_{z\in \conv(\Omega_K)^n}\left|f(z)\right|
		\le C(d,K)\sup_{z\in \Omega_K^n}\left|f(z)\right|,
	\end{equation}
	then we will deduce 
	\begin{equation*}
		B_K(d)\le C(d,k,K)B_K(k)
	\end{equation*}
	and derive the upper bound of $B_K(d)$ inductively. 
	
	\medskip
	
	The main difficulty we have to overcome is the absence of \eqref{ineq-appendix:mmp} which is obvious for $\T^n$ and $\Om_2^n=\{-1, 1\}^n$ as explained in Section \ref{sect:obstacle}.
	
	\medskip

Therefore, it seems that the best that one can deduce from the above argument is the convex hull version of Bohnenblust--Hille inequalities for cyclic groups $\Omega_K^n,K>2$:
	\begin{equation}\label{ineq in appendix:convex hull bh}
\left(		\sum_{|\alpha|=d}|a_{\alpha}|^{\frac{2d}{d+1}}\right)^{\frac{d+1}{2d}}
\le C(d,K)\|f\|_{\conv(\Omega_K)^n}
	\end{equation}
	for all $f(z)=\sum_{|\alpha|=d}a_{\alpha}z^\alpha$, which motivates Conjecture \ref{conjectuer}.

\begin{rem}
Denote $\BH_{\conv(\Omega_K)}^{=d}$ the best constant of the convex hull version of Bohnenblust--Hille inequality \eqref{ineq in appendix:convex hull bh} for homogeneous polynomials on $\conv(\Omega_K)^n$ of degree $d$.
Following the similar argument as above, one arrives at 
\begin{equation}\label{ineq:inductive ineq convex hull}
\BH_{\conv(\Omega_K)}^{=d}\le C(d,k,K)\BH_{\conv(\Omega_K)}^{=k}
\end{equation}
for some $C(d,k,K)<\infty$. In fact, the only difference is to apply \eqref{ineq in appendix:convex hull bh} in \textbf{Step 4} instead of the standard Bohnenblust--Hille inequality for $\Omega_K^n$. The \textbf{Step 5} remains the same, which gives \eqref{ineq:inductive ineq convex hull}. We leave it to the interested readers to chase the upper bound of $\BH_{\conv(\Omega_K)}^{=d}$ following the arguments in \cite{DMP}.
\end{rem}

\section{A partial answer to Conjecture \ref{conjectuer}: convex hull of rows of the discrete Fourier transform matrices}
\label{sect:partial}
This subsection collects some partial results about Conjecture \ref{conjectuer}; it also appeared in an earlier version of \cite{SVZ}.
We first show that the conjecture cannot hold with $C(d,K)=1$ in the one-dimensional case, implying one cannot obtain the desired result simply by tensorization without using the low-degree assumption.
Then we provide an affirmative answer under a certain constraint, recovering the Bohnenblust--Hille inequalities for cyclic groups in a partial case by this method.

\subsection{Constant cannot be $1$} 
\label{not1}

 Let $K=3$ and $\omega:=e^{\frac{2\pi i}{3}}$. Consider the polynomial 
\begin{equation*}
	p(z):=p(1)\frac{(z-\omega)(z-\omega^2)}{(1-\omega)(1-\omega^2)}
	+p(\omega)\frac{(z-1)(z-\omega^2)}{(\omega-1)(\omega-\omega^2)}
	+p(\omega^2)\frac{(z-1)(z-\omega)}{(\omega^2-1)(\omega^2-\omega)}
\end{equation*}
with $p(1), p(\omega), p(\omega^2)$ to be chosen later. Put $z_0:=\frac{1+\omega}{2}\in \conv(\Omega_3)$. Then 
\begin{equation*}
	|z_0-1|=|z_0-\omega|=\frac{\sqrt{3}}{2},\qquad |z_0-\omega^2|=\frac{3}{2}.
\end{equation*}
Now we choose $p(1), p(\omega), p(\omega^2)$ to be complex numbers of modules $1$ such that 
\begin{equation*}
	p(1)\frac{(z_0-\omega)(z_0-\omega^2)}{(1-\omega)(1-\omega^2)}=\left|\frac{(z_0-\omega)(z_0-\omega^2)}{(1-\omega)(1-\omega^2)}\right|
	=\frac{\frac{3\sqrt{3}}{4}}{3}
	=\frac{\sqrt{3}}{4},
\end{equation*}
\begin{equation*}
	p(\omega)\frac{(z_0-1)(z_0-\omega^2)}{(\omega-1)(\omega-\omega^2)}
	=\left|\frac{(z_0-1)(z_0-\omega^2)}{(\omega-1)(\omega-\omega^2)}\right|
	=\frac{\frac{3\sqrt{3}}{4}}{3}
	=\frac{\sqrt{3}}{4},
\end{equation*}
\begin{equation*}
	p(\omega^2)\frac{(z_0-1)(z_0-\omega)}{(\omega^2-1)(\omega^2-\omega)}
	=\left|\frac{(z_0-1)(z_0-\omega)}{(\omega^2-1)(\omega^2-\omega)}\right|
	=\frac{\frac{3}{4}}{3}
	=\frac{1}{4}.
\end{equation*}
Therefore, this choice of $p$ satisfies 
\begin{equation*}
	\|p\|_{\conv(\Omega_3)}\ge |p(z_0)|
	=\frac{\sqrt{3}}{4}+\frac{\sqrt{3}}{4}+\frac{1}{4}
	=\frac{1+2\sqrt{3}}{4}>1=\|p\|_{\Omega_3}.
\end{equation*}

\subsection{A partial solution}
In this part, we give a partial solution to Conjecture \ref{conjectuer}. 
Let us remark that for notational convenience, in the remaining part we will actually consider cyclic groups of order $2K-1$ instead of $K$.
A similar argument works for cyclic groups of even order.
We start with the following key lemma saying that 
\begin{equation*}
	(1,z,z^2\dots, z^{K-1})\in \mathbf{C}^K
\end{equation*}
lies in the convex hull of ``first half'' of row vectors of the $K$-point discrete Fourier transform matrix as long as $|z|$ is small enough. 

\begin{lemma}
	\label{key lem}
	Fix $K\ge 2$, put $\xi_k=e^{\frac{2\pi i k}{2K-1}}$. There exists $\eps_0=\eps_0(K)\in (0,1)$ such that, for all $z\in\bC$ with $|z|\le \eps_0$, one can find $p_k=p_k(z)\ge  0, 0\le k\le 2K-2$ satisfying 
	\begin{equation}\label{eq:complex equations to solve}
		z^m=\sum_{k=0}^{2K-2}p_k\xi_k^{m}, \qquad 0\le m\le K-1.
	\end{equation}
	In particular, when $m=0$, $\sum_{k=0}^{2K-2}p_k=1$.
\end{lemma}

\begin{proof}
	Put $\theta=\frac{2\pi}{2K-1}$. The equations \eqref{eq:complex equations to solve} are equivalent to ($p_k$'s are non-negative and thus real)
	\begin{equation}\label{eq:real equations to solve}
		\begin{cases*}
			\sum_{k=0}^{2K-2}p_k=1&\\
			\sum_{k=0}^{2K-2}p_k\cos(km\theta)=\Re z^m& $1\le m\le K-1$\\
			\sum_{k=0}^{2K-2}p_k\sin(km\theta)=\Im z^m& $1\le m\le K-1$
		\end{cases*}.
	\end{equation}
	Or equivalently, we want to find a solution to $D_K\vec p=\vec v_z$ with each entry of $\vec p$ being non-negative. Here $D_K$ is a $(2K-1)\times (2K-1)$ real matrix given by
	\begin{equation*}
		D_K=
		\begin{bmatrix*}
			1 & 1 & 1 &\cdots & 1\\
			1 & \cos(\theta) &\cos(2\theta)&\cdots &\cos((2K-2)\theta)\\
			\vdots & \vdots & \vdots &&\vdots \\
			1 & \cos((K-1)\theta) &\cos(2(K-1)\theta)&\cdots &\cos((2K-2)(K-1)\theta)\\
			1 & \sin(\theta) &\sin(2\theta)&\cdots &\sin((2K-2)\theta)\\
			\vdots & \vdots & \vdots &&\vdots \\
			1 & \sin((K-1)\theta) &\sin(2(K-1)\theta)&\cdots &\sin((2K-2)(K-1)\theta)
		\end{bmatrix*},
	\end{equation*}
	and $\vec v_z=(1,\Re z, \dots, \Re z^{K-1},\Im z,\dots, \Im z^{K-1})^T\in \bR^{2K-1}$.
	
	Note first that $D_K$ is non-singular. In fact, assume that $D_K\vec x=\vec 0$ with $\vec x=(x_0,x_1, \dots, x_{2K-2})^T\in \bR^{2K-1}$. Then 
	\begin{equation*}
		\sum_{k=0}^{2K-2}x_k\xi_k^{m}=0, \qquad 0\le m\le K-1.
	\end{equation*}
	Since each $x_k$ is real and $\xi^{2K-1}=1$, we have by taking conjugation that 
	\begin{equation*}
		\sum_{k=0}^{2K-2}x_k\xi_k^{m}=0, \qquad K\le m\le 2K-1.
	\end{equation*}
	Altogether we get 
	\begin{equation*}
		\sum_{k=0}^{2K-2}x_k\xi_k^{m}=0, \qquad 0\le m\le 2K-2.
	\end{equation*}
	Since the Vandermonde matrix associated to $(1,\xi,\dots, \xi_{2K-2})$ has determinant
	\begin{equation*}
		\prod_{0\le j<k\le 2K-2}(\xi_{j}-\xi_{k})\neq 0,
	\end{equation*}
	we get $\vec x=\vec 0$. So $D_K$ is non-singular. 
	
	Therefore, for any $z\in\bC$, the solution to \eqref{eq:real equations to solve}, thus to \eqref{eq:complex equations to solve}, is given by 
	$$
	\vec p_z=(p_0(z),p_1(z),\dots, p_{2K-2}(z))=D_K^{-1}\vec v_z\in \bR^{2K-1}.
	$$ 
	
	Notice one more thing about the rows of $D_K$. As
	$$
	\sum_{k=0}^{2K-2} \xi_k^m =0,\quad \forall m=1, 2, \dots, 2K-2,
	$$
	we have automatically that vector $\vec v_0 := (\frac1{2K-1}, \dots, \frac1{2K-1})$ of length $2K-1$ gives
	$$
	D_K \vec v_0 =(1, 0, 0, \dots, 0)^T\,.
	$$
	
	For any $k$-by-$k$ matrix $A$ denote 
	$$\|A\|_{\infty\to\infty}:=\sup_{\vec 0\neq \vec v\in\bR^{k}}\frac{\|A\vec v\|_\infty}{\|\vec v\|_\infty}.$$
	So we have 
	\begin{align*}
		\|\vec p_z-\vec p_0\|_\infty
		\le &\|D_K^{-1}\|_{\infty\to \infty}\|\vec v_z-\vec v_0\|_\infty\\
		=&\|D_K^{-1}\|_{\infty\to \infty}\max\left\{\max_{1\le k\le K-1}|\Re z^k|,\max_{1\le k\le K-1}|\Im z^k|\right\}\\
		\le &\|D_K^{-1}\|_{\infty\to \infty}\max\left\{|z|,|z|^{K-1}\right\}.
	\end{align*}
	That is, 
	\begin{equation*}
		\max_{0\le j\le 2K-2}\left|p_j(z)-\frac{1}{2K-1}\right|
		\le \|D_K^{-1}\|_{\infty\to \infty}\max\left\{|z|,|z|^{K-1}\right\}.
	\end{equation*}
	Since $D_K^{-1}\vec v_0=\vec p_0$, we have $\|D_K^{-1}\|_{\infty\to \infty}\ge 2K-1$.
	Put
	$$
	\eps_0:=\frac{1}{(2K-1)\|D_K^{-1}\|_{\infty\to \infty}}\in \left(0,\frac{1}{(2K-1)^2}\right].
	$$
	Thus, whenever $|z|<\eps_0<1$, we have 
	\begin{equation*}
		\max_{0\le j\le 2K-2}\left|p_j(z)-\frac{1}{2K-1}\right|
		\le \eps_0 \|D_K^{-1}\|_{\infty\to \infty}\le \frac{1}{2K-1}.
	\end{equation*}
	Therefore, $p_j(z)\ge 0$ for all $0\le j\le 2K-2$ and the proof is complete.
\end{proof}

With Lemma \ref{key lem}, we may give a partial solution to Conjecture \ref{conjectuer}. Suppose that 
\[f(z):=\sum_\al a_\al z^\al,\qquad  z=(z_1, \dots, z_n)\in \bC^n\]
is any analytic  polynomial of $n$ complex variables of degree at most $d$
and such that in each variable $z_i$ its degree is at most $K-1$. We shall prove that 
	\begin{equation}\label{ineq:partial conj}
	\max_{z\in \conv(\Omega_{2K-1})^n}|f(z)|\le 
	C(d,K)\max_{z\in \Omega_{2K-1}^n}|f(z)|.
\end{equation}
This gives a positive answer to Conjecture \ref{conjectuer} for odd $K$ with an additional constraint that the individual degree is not larger than $\frac{K-1}{2}$.

To prove \eqref{ineq:partial conj}, denote $\xi:=e^{\frac{2\pi i}{2K-1}}$ and $\xi_k:=\xi^k$. Note that by Lemma \ref{key lem}, there exists $\eps_0=\eps_0(K)\in (0,1)$ such that for all $ z=(z_1,\dots, z_n)$ with $\| z\|_\infty\le \eps_0$, we have 
	\begin{equation*}
		z^m_j=\sum_{k=0}^{2K-2}p^{(j)}_k \xi^{m}_k, \qquad 1\le j\le n, \quad 0\le m\le K-1,
	\end{equation*}
	where $p_k^{(j)}=p_k^{(j)}(z_j)>0$ satisfies $\sum_{k=0}^{2K-2}p_k^{(j)}=1$ for any $1\le j\le n$. Then 
	%
	we have 
	\begin{align*}
		\left|f(z_1,\dots, z_n)\right|
		&=\left|\sum_{\al_1,\dots, \al_n}a_{\al_1,\dots, \al_n}z_1^{\al_1}\cdots z_n^{\al_n}\right|\\
		&=\left|\sum_{\al_1,\dots, \al_n}\sum_{k_1,\dots, k_n=0}^{2K-2}a_{\al_1,\dots, \al_n}p_{k_1}^{(1)}\cdots p_{k_n}^{(n)}\xi_{k_1}^{\al_1}\cdots \xi_{k_n}^{\al_n}\right|\\
		&\le \sum_{k_1,\dots, k_n=0}^{2K-2}p_{k_1}^{(1)}\cdots p_{k_n}^{(n)}\left|\sum_{\al_1,\dots, \al_n}a_{\al_1,\dots, \al_n}\xi_{k_1}^{\al_1}\cdots \xi_{k_n}^{\al_n}\right|\\
		&= \sum_{k_1,\dots, k_n=0}^{2K-2}p_{k_1}^{(1)}\cdots p_{k_n}^{(n)}|f(\xi_{k_1},\dots, \xi_{k_n})|\\
		&\le \sum_{k_1,\dots, k_n=0}^{2K-2}p_{k_1}^{(1)}\cdots p_{k_n}^{(n)}\sup_{z \in \Om_{2K-1}^n}\left|f(z)\right|\\
		&=\sup_{z \in \Om_{2K-1}^n}\left|f( z)\right|.
	\end{align*}
	So we have shown that 
	\begin{equation}\label{ineq:key}
		\sup_{\|z\|_\infty \le \eps_0}\left|f(z)\right|
		\le \sup_{z \in \Om_{2K-1}^n}\left|f(z)\right|.
	\end{equation}
	Now consider 
	$$
	P(z):=f(\eps_0 z_1,\dots, \eps_0 z_n)=\sum_{\al}\eps_0^{|\al|}a_{\al}z^{\al}.
	$$
	Then
	\begin{equation}
	\label{Pf}
		 \sup_{\|z\|_\infty\le 1} |P (z)| \le \sup_{\|z\|_\infty\le \eps_0} |f(z)| \le \sup_{z \in \Om_{2K-1}^n}\left|f(z)\right|\,.
		 \end{equation}
For analytic polynomial $P$ on $\T^n$ its $k$-homogeneous parts $P_k, 0\le k\le d$ are bounded trivially:
\begin{equation}\label{ineq:cauchy estimate}
	 \sup_{\|z\|_\infty\le 1} |P_k (z)| \le  \sup_{\|z\|_\infty\le 1} |P (z)| \,.
\end{equation}

	But $P_k(z)= \eps_0^k f_k(z)$, where $f_k$ is the $k$-homogeneous part of $f$. Combining \eqref{Pf} and \eqref{ineq:cauchy estimate}, we get
	$$
	 \sup_{\|z\|_\infty\le 1} |f_k (z)|  \le \eps_0^{-k}  \sup_{z \in \Om_{2K-1}^n}\left|f(z)\right| \,.
	 $$
	 
	 Summation in $k\le d$ and the triangle inequality finish the estimate
	 $$
	 	 \sup_{z\in \conv(\Om_{2K-1})^n} |f (z)| \le
	 \sup_{\|z\|_\infty\le 1} |f (z)|  \le (d+1) \eps_0^{-d}  \sup_{z \in \Om_{2K-1}^n}\left|f(z)\right| \, ,
	 $$
	 which gives our generalized maximum modulus principle \eqref{ineq:mmp}.
	 \begin{rem}
	 \label{N3}
	 As we mentioned earlier, the Conjecture \ref{conjectuer} has been resolved in our later work \cite{SVZgmp} with completely different arguments. 
\end{rem}

\end{document}